\providecommand{\keywords}[1]{\textbf{\textit{Keywords:}} #1}
\def\Box{\leavevmode\vbox{\hrule
     \hbox{\vrule\kern4pt\vbox{\kern4pt}%
           \vrule}\hrule}}
\def\paragraph#1{{\bf #1\ }}
\newtheorem{lemma}{Lemma}[section]
\newtheorem{theorem}[lemma]{Theorem}
\newtheorem{definition}[lemma]{Definition}
\newtheorem{proposition}[lemma]{Proposition}
\newtheorem{remark}{Remark}[section]
\newcommand{\cmark}{\ding{51}}
\newcommand{\xmark}{\ding{55}}
\newcommand{\XX}{\mathbf{X}}
\newcommand{\bXX}{\bar{\mathbf{X}}}
\newcommand{\tXX}{\tilde{\mathbf{X}}}
\newcommand{\VV}{\mathbf{V}}
\newcommand{\tVV}{\tilde{\mathbf{V}}}	
\newcommand{\RR}{\mathbb{R}}
\newcommand{\NN}{\mathbb{N}}
\newcommand{\LL}{\mathcal{L}}
\newcommand{\PP}{\mathcal{P}}
\newcommand{\OO}{\mathcal{O}}
\newcommand{\llambda}{\boldsymbol{\lambda}}
\newcommand{\bllambda}{\bar{\boldsymbol{\lambda}}}
\newcommand{\mmu}{\boldsymbol{\mu}}
\newcommand{\tmmu}{\tilde{\boldsymbol{\mu}}}
\title{Damped Arrow-Hurwicz algorithm for sphere packing}
\author[1]{Pierre Degond}
\author[1]{Marina A. Ferreira}
\author[2]{Sebastien Motsch}
\affil[1]{Department of Mathematics, South Kensington Campus, Imperial College London, SW7 2AZ London, United Kingdom}
\affil[2]{School of Mathematical \& Statistical Sciences, Arizona State University, Tempe, AZ 85287-1804, United States of America}
\begin{document}

\maketitle

\setcounter{equation}{0}

\begin{abstract}
We consider algorithms that, from an arbitrarily sampling of $N$ spheres (possibly overlapping), find a close packed configuration without overlapping. 
These problems can be formulated as minimization problems with non-convex constraints. 
For such packing problems, we observe that the classical iterative Arrow-Hurwicz algorithm does not converge.
We derive a novel algorithm from a multi-step variant of the Arrow-Hurwicz scheme with damping.
We compare this algorithm with classical algorithms belonging to the class of linearly constrained Lagrangian methods and show that it performs better.
We provide an analysis of the convergence of these algorithms in the simple case of two spheres in one spatial dimension. 
Finally, we investigate the behaviour of our algorithm when the number of spheres is large.

\end{abstract}

\keywords{
Non-convex minimization problem
sphere packing problem
non-overlapping constraints.
}

\tableofcontents

\section{Introduction}

Particle packing problems can be encountered in many different systems, from the formation of planets or cells in live tissues to the dynamics of crowds of people. 
In particular, they have been widely investigated in the study of granular media~\cite{Makse2004}, glasses~\cite{Zallen1983} and liquids~\cite{Hansen1990}. 
More recently, particle packings have revealed to be important tools in biology~\cite{kreft1998bacteria} and social sciences~\cite{cordoba2009crowd}.

Packing problems give rise to NP-hard non-convex optimization problems~\cite{Hifi2009} and the optimal solution is in general not unique, since permutations, rotations or reflections may generate equivalent solutions. 
We refer the reader to~\cite{Hifi2009} for a review on packing problems. 
In the literature, one can find numerical studies involving particles with various shapes such as ellipses~\cite{Wenxiang2010} or even non-convex particles~\cite{faure2009spheres}.
However, in the present work we assume that the particles are simply identical spheres with diameter $d$ in $\RR^2$, 
but the methodology is general and will be extended to other cases in future work.  
We consider algorithms that, given an initial configuration of $N$ spheres (possibly overlapping), find a nearby packed configuration without overlapping. Indeed, in many natural systems individuals or particles only seek to achieve a locally optimal solution. 
Therefore, it is more likely that they reach a local configuration that does not necessarily correspond to a global optimum.
By combining our method with, for example, simulated annealing techniques~\cite{aarts1988annealing}, we could convert our algorithms into global minimum search algorithms. It is however not the objective we pursue here. 

Classical procedures to solve non-convex minimization problems include Uzawa-Arrow-Hurwicz type algorithms~\cite{Uzawa58}, augmented Lagrangian~\cite{Bertsekas1996, Birgin2005}, linearly constrained Lagrangian (LCL), sequential quadratic programming (SQP)~\cite{nocedal2006numerical}, among others.
The SQP and the Uzawa-Arrow-Hurwicz algorithms are widely used. However they require the Hessian matrix of the function to be minimized to be positive definite, which is not always the case in this type of problems (see the example presented in section~\ref{sec:linearAnalysis}).
In general, all these methods perform well with a small number of particles. However we are interested in the case where this number becomes large.

In~\cite{doye2006atomicClusters, hartke2006atomicClusters} the authors study the shape of three dimensional clusters of atoms under the effect of soft potentials by using molecular dynamics. This approach differs from ours with regard to the non-overlapping constraints, which are approximated by soft potentials, producing soft dynamics. 
Although being more costly when dealing with a large number of particles, we have opted by the hard dynamics approach, since it allows for a higher precision in the treatment of the constraints. This proves effective when dealing with interaction between rigid bodies, where the effect of the rigid boundaries plays an important role. This motivates the present work.

We start in section~\ref{sec:minimizationProblems} by presenting two formulations of the problem. The first one is the classical minimization approach. The second one considers a constrained dynamical system in the spirit of~\cite{Maury2006}.
We also present two equivalent types of non-overlapping constraints involving smooth or non-smooth functions which are found in the literature~\cite{Hifi2009, Addis2008}.
To solve the non-convex minimization problems arising in these formulations, we first consider in section~\ref{sec:ArrowHurwicz} the classical Arrow-Hurwicz algorithm (AHA). 
In section~\ref{sec:DAHA} we introduce a novel multi-step scheme based on a second-order ODE interpretation of the minimization problem: the damped Arrow-Hurwicz algorithm (DAHA). 
We test the DAHA against two methods taken from the widely known class of linearly constrained Lagrangian algorithms~\cite{nocedal2006numerical,Maury2006}.
These algorithms consist of a sequence of convex minimization problems, for we refer to them as \textit{nested algorithms (NA)} and they shall be referred to as the NAP and NAV.
The convergence of the four algorithms (the AHA, DAHA, NAP and NAV) is analyzed in section~\ref{sec:linearAnalysis} for the case of two spheres in one  dimension. In section~\ref{sec:numerics} the algorithms are numerically compared for the cases of many spheres in two dimensions. Finally, conclusions and future works are presented in section~\ref{sec:future}.

\section{The damped Arrow-Hurwicz algorithm  (DAHA)}

\subsection{Minimization problems for sphere packing}\label{sec:minimizationProblems}

We first recall two different formulations of generic minimization problems.
Let $N$ and $b$ be two given positive integers.
We consider first the problem of finding a configuration $\bXX$ such that
\begin{eqnarray}
\bXX \in \underset{\phi_{k\ell}(\XX)\leq 0,\ k,\ell=1,...,N,\ k < \ell}{\mbox{argmin}} W(\XX).\label{eq:GenericProblem}
\end{eqnarray}
where $W: \RR^{bN} \rightarrow \RR$ is a convex function (not necessarily strictly convex). The functions $\phi_{k\ell}: \RR^{bN} \rightarrow \RR,\ k,\ell=1,...,N,\ k<\ell$ are continuous but not necessarily convex.
In what follows, $d$ will denote the diameter of a sphere, $N$ the number of spheres, $b$ the spatial dimension, $\XX$ the position of the center of the spheres and $\phi_{k\ell}$ the non-overlapping constraint functions between the $kth$ and $\ell th$ spheres.
An illustration of the non-overlapping constraints, as well as, a possible solution for $N=7$ are presented in Figure~\ref{fig:illustration_Solution}.
\begin{figure}[!h]
\begin{subfigure}{.5\textwidth}
  \centering
  \includegraphics[scale=0.8]{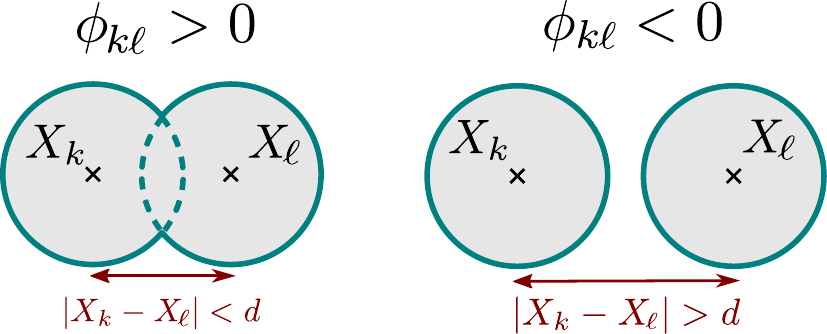}
   \caption{}
  \label{fig:illustration1}
\end{subfigure}
\begin{subfigure}{.5\textwidth}
  \centering
  \includegraphics[scale=0.8]{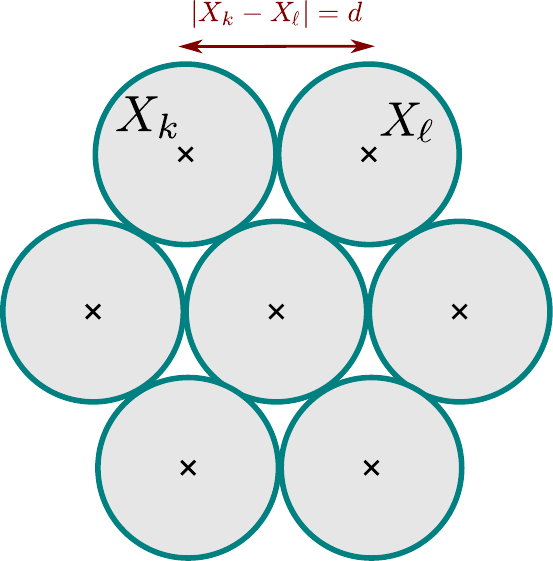}
    \caption{}
  \label{fig:illustration2}
\end{subfigure}
\caption{Representation of the non-overlapping constraints,~\ref{fig:illustration1}, and a possible optimal solution of~\eqref{eq:GenericProblem} for $N=7$,~\ref{fig:illustration2}.}
\label{fig:illustration_Solution}
\end{figure}
We suppose that the set $\{\XX \in \RR^{bN} | \phi_{k\ell}(\XX) \leq 0 ,\ k,\ell=1,...,N,\ k<\ell\}$ is compact.
In these conditions, $\bXX$ exists but may not be unique. 
We also assume that $\phi_{k\ell},\ k,\ell=1,...,N,\ k<\ell$ and $W$ are $C^1$ functions in the neighbourhood of $\bXX$. 

We now present a second formulation consisting in solving a minimization problem associated with a discrete dynamical system which has $\bXX$ as a fixed point. 
Let $| \cdot |$ denote the Euclidean norm on $\RR^{b}$. The problem is formulated iteratively: given an initial configuration $\XX^0 = \{X_i^0\}_{i=1,...,N}$, we pass from iterate $\XX^p$ to iterate $\XX^{p+1}$ as follows
\begin{numcases}{}
\XX^{p+1}         = \XX^p + \tau\  \VV^{p+1}\label{eq:vel1}\\
\VV^{p+1}   \in \underset{\phi_{k\ell}(\XX^p+\tau\   \VV)\leq 0,\  k,\ell=1,...,N,\ k < \ell}{\mbox{argmin}} \frac{1}{2} \sum_{i=1}^N |V_i + \nabla_{X_i}W(\XX^p)|^2,\label{eq:vel2}
\end{numcases}
where $\tau>0$ is a given parameter and $\VV = \{V_i\}_{i=1,...,N}$.
We define $\tXX$ as a fixed point of this problem. 
Consequently, $\tXX$ satisfies
\begin{eqnarray}
0   &\in & \underset{\phi_{k\ell}(\tXX+\tau\   \VV)\leq 0,\ k,\ell=1,...,N,\ k < \ell}{\mbox{argmin}} \frac{1}{2}\sum_{i=1}^N |V_i+ \nabla_{X_i}W(\tXX)|^2,\label{eq:vel_eq}
\end{eqnarray}
Note that the set defined by the constraints is compact, so the minima of~\eqref{eq:vel_eq} exist, but may not be unique.

The minimization problem~\eqref{eq:GenericProblem} can be formulated in terms of the Lagrangian $\LL: \RR^{bN}\times (\RR^+_0)^{N(N-1)/2} \rightarrow \RR$ defined by
$$\LL(\XX, \llambda) =  W(\XX) + \sum\limits_{k,\ell \in \{1,...,N\},\ k<\ell} \lambda_{k\ell} \phi_{k\ell}(\XX),$$
where $\llambda = \{ \lambda_{k\ell} \}_{k,\ell=1,...,N,\ k < \ell} $ represents the set of Lagrange multipliers.
If $\bXX$ is a solution of the minimization problem~\eqref{eq:GenericProblem}, then, there exists $\bllambda\in (\RR_0^+)^{N(N-1)/2}$ such that $(\bXX,\bllambda)$ is a critical-point of the Lagrangian, namely, $(\bXX,\bllambda)$ satisfies the {\it KKT-conditions}~\cite{Kjeldsen2000}:
$$
\begin{cases}
\nabla_{X_i} \LL(\bXX,\bllambda) =\ 0,\ i=1,...,N  \\
\left(\nabla_{\lambda_{k\ell}} \LL(\bXX,\bllambda) =0\ \text{ and }\bar\lambda_{k\ell}\geq 0\right) \text{ or } \left(\nabla_{\lambda_{k\ell}} \LL(\bXX,\bllambda) <0 \text{ and } \bar\lambda_{k\ell} = 0\right), \\
\hspace{8cm} k,\ell = 1,...,N, k < \ell
\end{cases}
$$
which is equivalent to
\begin{equation}
\begin{cases}
\nabla_{X_i} W(\bXX) + \sum\limits_{k,\ell \in \{1,...,N\},\ k<\ell} \bar\lambda_{k\ell}\nabla_{X_i}\phi_{k\ell}(\bXX)  =\ 0,\ i=1,...,N  \\
\left(\phi_{k\ell}(\bXX) =0\text{ and }\bar\lambda_{k\ell}\geq 0\right) \text{ or } \left(\phi_{k\ell}(\bXX) < 0 \text{ and } \bar\lambda_{k\ell} = 0\right),\\
\hspace{6cm} k,\ell=1,...,N,k <\ell.
\end{cases}\label{eq:systemSaddlePoint}
\end{equation}
We have reduced our original problem~\eqref{eq:GenericProblem} to a critical-point system, with a possible enlargement of the set of solutions.
Contrarily to convex optimization, in the case of packing problems, these critical-points may not be saddle-points~\cite{PreprintSebastien}.

We also formulate the minimization problem~\eqref{eq:vel2} in terms of a Lagrangian $\LL^p$,
$$
\LL^p( \VV, \mmu) = \frac{1}{2}\sum_{i=1}^N |V_i+ \nabla_{X_i}W(\XX^p)|^2 + \sum\limits_{k,\ell \in \{1,...,N\},\ k<\ell} \mu_{k\ell}\phi_{k\ell}(\XX^p + \tau\  \VV),$$
where $\mmu = \{\mu_{k\ell}\}_{k,\ell=1,...,N,\ k< \ell}$ is the set of Lagrange multipliers associated to the constraints.
The gradients of the Lagrangian are given by
\begin{eqnarray}
\nabla_{V_i} \LL^p(\VV,\mmu) &=& V_i + \nabla_{X_i}W(\XX^p) \nonumber\\
&&\ \ \ \  +\ \tau\ \sum\limits_{k,\ell \in \{1,...,N\},\ k<\ell} \mu_{k\ell} \nabla_{X_i} \phi_{k\ell}(\XX^p + \tau\  \VV), \ i=1,...,N\nonumber \\
\nabla_{\mu_{k\ell}} \LL^p(\VV,\mmu) &=&\phi_{k\ell}(\XX^p + \tau\  \VV),\ k,\ell = 1,...,N.\nonumber
\end{eqnarray}
The dynamical system is written:
$\tXX^{p+1} = \tXX^p + \tau\ \tVV^{p+1}$ such that $(\tVV^{p+1},\tmmu^{p+1})$ is a solution of the critical-point problem
\begin{eqnarray}
\begin{cases}
 \tilde V_i^{p+1}+ \nabla_{X_i}W(\tXX^p)  +\tau\ \sum\limits_{k,\ell \in \{1,...,N\},\ k<\ell} \tilde\mu_{k\ell}^{p+1} \nabla_{X_i} \phi_{k\ell}(\tXX^p + \tau\  \tVV^{p+1}) = 0,\\
 \hspace{9cm} \ i=1,...,N \\
\left(\phi_{k\ell}(\tXX^p + \tau\  \tVV^{p+1}) =0\text{ and }\tilde\mu_{k\ell}^{p+1}\geq 0\right) \text{ or } \\ 
\hspace{1cm}\left( \phi_{k\ell}(\tXX^p + \tau\ \tVV^{p+1}) < 0 \text{ and } \tilde \mu_{k\ell}^{p+1} = 0\right),\ k,\ell=1,...,N,\ k<\ell
\end{cases}\label{eq:systemSaddlePoint_V}
\end{eqnarray}
Likewise, the fixed point $\tXX$ of the dynamical system is defined such that there exists $\tmmu$ such that 
\begin{eqnarray}
\begin{cases}
\nabla_{X_i}W(\tXX) +\tau\ \sum\limits_{k,\ell \in \{1,...,N\},\ k<\ell} \tilde\mu_{k\ell} \nabla_{X_i} \phi_{k\ell}(\tXX) = 0,\ i=1,...,N \\
\left(\phi_{k\ell}(\tXX) =0\text{ and }\tilde\mu_{k\ell}\geq 0\right) \text{ or } \left( \phi_{k\ell}(\tXX) < 0 \text{ and } \tilde \mu_{k\ell} = 0\right),\\
\hspace{2cm}k,\ell=1,...,N,\ k<\ell.
\end{cases}\label{eq:fixedPoint_V}
\end{eqnarray}

Then, it is clear that problems~\eqref{eq:systemSaddlePoint} and~\eqref{eq:fixedPoint_V} are equivalent for all values of $\tau>0$ by setting $\bllambda = \tau \tmmu$.
However, the choice of $\tau$ is important to ensure convergence of the dynamical system~\eqref{eq:systemSaddlePoint_V} to the fixed point.

As it will be obvious below, all functions $W$ and $\phi_{k\ell}$ used throughout the paper will satisfy the conditions considered in this section.
The nonlinear systems~\eqref{eq:systemSaddlePoint} or~\eqref{eq:systemSaddlePoint_V} will have to be solved by an iterative algorithm.
We now present the algorithms considered in the paper.

\subsection{The Arrow-Hurwicz algorithm (AHA)}\label{sec:ArrowHurwicz}

The classical Arrow-Hurwicz iterative algorithm~\cite{Uzawa58} searches a saddle-point of the Lagrangian by alternating steps in the direction of $-\nabla_{\XX}\LL$ and $+\nabla_{\llambda}\LL$. 
Using this idea, a saddle-point is then a steady-state solution of the Arrow-Hurwicz system of ODE's (AHS) which is defined next.

\begin{definition}
The \textbf{Arrow-Hurwicz system (AHS)} is defined by
\begin{numcases}{}
\dot X_i           & $=\ -\alpha \left( \nabla_{X_i} W(\XX) + \sum\limits_{k,\ell \in \{1,...,N\},\ k<\ell} \lambda_{k\ell}\nabla_{X_i}\phi_{k\ell}(\XX)\right),$ \nonumber\\
& $\hspace{6cm} \ i=1,...,N$ \label{eq:AHSystem1} \\
\dot \lambda_{k\ell}  &$=\ 
\begin{cases}
0,\ \text{ if } \lambda_{k\ell}=0 \text{ and } \phi_{k\ell}(\XX)<0\\
\beta \phi_{k\ell}(\XX),\text{ otherwise}
\end{cases}$,\nonumber \\
& $\hspace{5cm} k,\ell = 1,...,N,\ k< \ell,$ \label{eq:AHSystem2}
\end{numcases}
where $\alpha$ and $\beta$ are positive constants. 
Considering a small time-step $\Delta t$, a semi-implicit Euler discretization scheme of the previous system leads to the \textbf{Arrow-Hurwicz algorithm (AHA)}, which is defined iteratively by
\begin{eqnarray}
\left\{
\begin{array}{ll}
X_i^{n+1}          &=\ X_i^n - \alpha \left[\nabla_{X_i} W(\XX^n) + \sum\limits_{k,\ell \in \{1,...,N\},\ k<\ell} \lambda_{k\ell}^n\nabla_{X_i}\phi_{k\ell}(\XX^n)\right],\\
&\hspace{8cm} \ i=1,...,N \\
\lambda_{k\ell}^{n+1} &=\ \max\{0,\lambda_{k\ell}^n + \beta\phi_{k\ell}(\XX^{n+1})\},\ k,\ell = 1,..., N,\ k< \ell 
\end{array} \right. \label{eq:uz}
\end{eqnarray}
where $\alpha$ and $\beta$ now correspond to $\tilde \alpha =\alpha \Delta t\ $ and $\tilde \beta =\beta \Delta t$ and the tildes have been dropped for simplicity.
\end{definition}

The original AHA was formulated using a fully explicit Euler scheme, but it has proved more accurate to use a semi-implicit scheme.
Finding a local steady-sate solution of \eqref{eq:AHSystem1}-\eqref{eq:AHSystem2} in the case of a packing problem has revealed not to be always possible because it often happens that no critical-point is a saddle-point~\cite{PreprintSebastien}. This manifests itself by the existence of periodic solutions of the AHS which do not converge to the critical-point. 
In order to overcome this difficulty we propose the damped Arrow-Hurwicz algorithm which is presented next. 
This method is based on a modification of the dynamics of the AHS that transforms an unstable critical-point into an asymptotically stable one.
The performance of our method will be tested by comparing with previous approaches~\cite{nocedal2006numerical, Maury2006}, which are based on a modification of the Lagrangian by linearly approximating the constraints. 
These approaches are presented in section~\ref{sec:NestedAlg}.

\subsection{The damped Arrow-Hurwicz algorithm}\label{sec:DAHA}

In order to avoid periodic solutions we will add a damping term as described below. Note that we are not interested on the transient dynamics of the system, but rather on its asymptotic behaviour. 

We propose the following definition.

\begin{definition}
We define the \textbf{damped Arrow-Hurwicz system (DAHS)} as
\begin{numcases}{}
 \ddot X_i &$= - \alpha^2 [\nabla_{X_i}  W (\XX) + \sum\limits_{k,\ell \in \{1,...,N\},\ k<\ell} \lambda_{k\ell} \nabla_{X_i} \phi_{k\ell}(\XX)]$ \nonumber\\ 
& \ \ \ $ - \alpha \beta \sum\limits_{k,\ell \in \{1,...,N\},\ k<\ell} \phi_{k\ell}(\XX)  \lambda_{k\ell} \nabla_{X_i} \phi_{k\ell}(\XX)  - c \dot X_i\ ,$\nonumber\\
& \hspace{5cm}$ \ i = 1,...,N$ \label{eq:secc1}\\
\dot \lambda_{k\ell}  &$=\ 
\begin{cases}
0,\ \text{ if } \lambda_{k\ell}=0 \text{ and } \phi_{k\ell}(\XX)<0\\
\beta \phi_{k\ell}(\XX),\ \text{ otherwise }
\end{cases}$,\nonumber \\
& \hspace{3.5cm}$ k,\ell = 1,...,N,\   k< \ell$ \label{eq:secc2} 
\end{numcases}
where $\alpha, \beta$ and $c$ are positive constants and
the~\textbf{damped Arrow-Hurwicz algorithm (DAHA)} as the corresponding semi-implicit discrete scheme:
\begin{numcases}{}
 X_i^{n+1} &$ = 
 \frac{1}{1+c/2} \left( 2X_i^n  - (1-c/2) X_i^{n-1}\right)$\nonumber\\
& $-  \frac{\alpha^2}{1+c/2} [\nabla_{X_i}  W (\XX^n) + \sum\limits_{k,\ell \in \{1,...,N\},\ k<\ell} \lambda_{k\ell}^n \nabla_{X_i} \phi_{k\ell}(\XX^n)]$ \nonumber\\ 
&  $- \frac{\alpha\beta}{1+c/2} \sum\limits_{k,\ell \in \{1,...,N\},\ k<\ell} \phi_{k\ell}(\XX^n)  \lambda_{k\ell}^n \nabla_{X_i} \phi_{k\ell}(\XX^n),$\nonumber \\
&\hspace{6cm}\ \ $ i = 1,...,N $\label{eq:secc_discrete1}\\
\lambda_{k\ell}^{n+1}  &$ = \max\{0,\lambda_{k\ell}^n + \beta\phi_{k\ell}(\XX^{n+1})\},\ k,\ell = 1,..., N,\ k< \ell,$\label{eq:secc_discrete2} 
\end{numcases}
where $\alpha, \beta$ and $c$ correspond now to numerical parameters.
\end{definition}

Note that the DAHA is a multi-step scheme, since not only one, but two previous configurations $\XX^{n-1}$ and $\XX^n$ are used to obtain $\XX^{n+1}$.
By setting $c=2$, the method is reduced to a one-step method.

In the following we present the derivation of the DAHS.
We start by considering the AHS~\eqref{eq:AHSystem1}-\eqref{eq:AHSystem2} presented in the previous section.
We then take the second-order version of \eqref{eq:AHSystem1}. 
For each $i=1,...,N$ we have 
\begin{eqnarray}
 \ddot X_i = && - \alpha \sum_{m=1}^N \nabla_{X_m} \big[  \nabla_{X_i}  W (\XX) + \sum\limits_{k,\ell \in \{1,...,N\},\ k<\ell} \lambda_{k\ell} \nabla_{X_i} \phi_{k\ell}(\XX) \big] \dot X_m \nonumber \\
&&- \alpha \sum\limits_{k,\ell \in \{1,...,N\},\ k<\ell} \dot \lambda_{k\ell} \nabla_{X_i} \phi_{k\ell}(\XX) . \label{eq:sec2}
\end{eqnarray}
Using (\ref{eq:AHSystem2}), we can replace $\dot \lambda_{k\ell}$ in (\ref{eq:sec2}) by $\beta \phi_{k\ell}(\XX) H(\lambda_{k\ell})$, where $H$ is the Heaviside function. 
It also proves more efficient to modify it by replacing $H(\lambda_{k\ell})$ by $\lambda_{k\ell} H(\lambda_{k\ell})$, which is equal to $\lambda_{k\ell}$. 
We get
\begin{eqnarray}
 \ddot X_i =  - \alpha \sum_{m=1}^N \nabla_{X_m} \big[  \nabla_{X_i}  W (X) + \sum\limits_{k,\ell \in \{1,...,N\},\ k<\ell} \lambda_{k\ell} \nabla_{X_i} \phi_{k\ell}(\XX) \big] \dot X_m \label{eq:complicatedExpression} \\
  - \alpha \beta \sum\limits_{k,\ell \in \{1,...,N\},\ k<\ell} \phi_{k\ell}(\XX)  \lambda_{k\ell} \nabla_{X_i} \phi_{k\ell}(\XX). 
\end{eqnarray}
It turns out that passing to the second-order introduces exponentially growing modes (see Remark~\ref{rem:exponentialModes}).
\begin{remark}
Consider the simple ODE $\dot u = - \alpha u$ whose solution is $u(t)=u_0 e^{-\alpha t}$, where $u_0$ is the initial configuration. Differentiating both sides of the equation and substituting $\dot u$ by  $- \alpha u$ yields $\ddot u = \alpha^2 u$, whose solution includes now an exponentially growing mode: $u(t)=c_1 e^{-\alpha t}+c_2e^{\alpha t}$, where $c_1$ and $c_2$ are real constants determined by the initial configurations.
\label{rem:exponentialModes} \end{remark}
In order to remove these modes, we replace the term in~\eqref{eq:complicatedExpression}
by a simple second-order dynamics in the force field given by the right hand side of (\ref{eq:AHSystem1}). 
We get: 
\begin{eqnarray}
\ddot X_i = && - \alpha^2 \left[ \nabla_{X_i}  W (\XX) + \sum\limits_{k,\ell \in \{1,...,N\},\ k<\ell} \lambda_{k\ell} \nabla_{X_i} \phi_{k\ell}(\XX)  \right]\nonumber \\ 
&& - \alpha \beta \sum\limits_{k,\ell \in \{1,...,N\},\ k<\ell} \phi_{k\ell}(\XX)  \lambda_{k\ell} \nabla_{X_i} \phi_{k\ell}(\XX). \label{eq:DAHAsystemDerivation}
\end{eqnarray}
Now, we just add a velocity damping term in the form of $- c \dot X_i$ and we finally obtain~\eqref{eq:secc1}. 
We end up with the system~\eqref{eq:secc1}-\eqref{eq:secc2}.

\begin{remark}
We can interpret the first term, at the right hand side of~\eqref{eq:DAHAsystemDerivation} as a second-order dynamics version of~\eqref{eq:AHSystem1}.
Denoting by $T_1$ and $T_2$ the terms in~\eqref{eq:DAHAsystemDerivation} which are multiplied by $-\alpha^2$ and $-\alpha \beta$, respectively, we recover~\eqref{eq:AHSystem1} in an over-damped limit $\epsilon \left[ \ddot X_i + \alpha\beta T_2 \right] = -\alpha^2 T_1 - c \dot X_i$, with $\epsilon \to 0$ and $c=1$.
\end{remark}

\begin{proposition}
The AHS~\eqref{eq:AHSystem1}-\eqref{eq:AHSystem2} and the DAHS~\eqref{eq:secc1}-\eqref{eq:secc2} have the same equilibrium solutions.
\end{proposition}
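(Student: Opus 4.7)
The plan is to write out the equilibrium conditions for each system explicitly and observe that the extra term distinguishing the DAHS from the AHS automatically vanishes at any equilibrium, so the two sets of equilibria coincide.

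First I would set up the equilibrium conditions. For the AHS, a steady state satisfies $\dot X_i = 0$ and $\dot\lambda_{k\ell}=0$. From~\eqref{eq:AHSystem1} the vanishing of $\dot X_i$ gives
\[
\nabla_{X_i} W(\XX) + \sum_{k<\ell}\lambda_{k\ell}\,\nabla_{X_i}\phi_{k\ell}(\XX) = 0, \qquad i=1,\ldots,N,
\]
while from~\eqref{eq:AHSystem2} the vanishing of $\dot\lambda_{k\ell}$ forces, for every pair $k<\ell$, either $\lambda_{k\ell}=0$ with $\phi_{k\ell}(\XX)<0$, or $\phi_{k\ell}(\XX)=0$ (with $\lambda_{k\ell}\geq 0$ by the state space of the multipliers). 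For the DAHS, equilibrium means $\ddot X_i = \dot X_i = 0$ and $\dot\lambda_{k\ell}=0$. The multiplier equation~\eqref{eq:secc2} is literally identical to~\eqref{eq:AHSystem2}, so it yields the same complementarity condition on $(\lambda_{k\ell},\phi_{k\ell}(\XX))$; the position equation~\eqref{eq:secc1} reduces to
\[
\alpha^2\Bigl[\nabla_{X_i}W(\XX)+\sum_{k<\ell}\lambda_{k\ell}\nabla_{X_i}\phi_{k\ell}(\XX)\Bigr] + \alpha\beta\sum_{k<\ell}\phi_{k\ell}(\XX)\,\lambda_{k\ell}\,\nabla_{X_i}\phi_{k\ell}(\XX) = 0.
\]

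The key observation is then that the complementarity condition inherited from $\dot\lambda_{k\ell}=0$ implies $\lambda_{k\ell}\,\phi_{k\ell}(\XX)=0$ for every pair $k<\ell$: in the first branch this is true because $\lambda_{k\ell}=0$, and in the second branch because $\phi_{k\ell}(\XX)=0$. Consequently the entire $-\alpha\beta$ term in~\eqref{eq:secc1} vanishes identically at any equilibrium, and the position equation for the DAHS collapses (using $\alpha>0$) to exactly the first-order condition arising from~\eqref{eq:AHSystem1}. This proves the inclusion that every DAHS equilibrium is an AHS equilibrium.

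The converse direction is immediate: starting from an AHS equilibrium, the same complementarity condition again makes the $\alpha\beta$ sum vanish, and the first bracket vanishes by hypothesis; since $\dot X_i = 0$ trivially at a constant state the damping term $-c\dot X_i$ is zero as well, so $\ddot X_i = 0$ is satisfied. No obstacle is anticipated; the only subtlety is to record carefully that the complementarity $\lambda_{k\ell}\phi_{k\ell}(\XX)=0$ is an equilibrium consequence of the $\lambda$-dynamics and is what makes the additional nonlinear term in the damped system inert at steady state.
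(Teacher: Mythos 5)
Your proof is correct and follows essentially the same route as the paper: the equilibrium condition $\dot\lambda_{k\ell}=0$ forces the complementarity $\lambda_{k\ell}\,\phi_{k\ell}(\XX)=0$, which annihilates the extra $\alpha\beta$ term in~\eqref{eq:secc1}, reducing the DAHS stationarity condition to that of the AHS (and conversely). Your write-up is merely more explicit than the paper's two-line argument, spelling out both inclusions and the role of $\alpha>0$.
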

\begin{proof}
If $(\llambda^*,\XX^*)$ is a steady state of the AHS, then either $ \phi_{k\ell}(\XX^*)= 0$ or $\lambda_{k\ell}^* = 0$. 
Consequently, $\lambda_{k\ell} \phi_{k\ell}(\XX^*) = 0$, which implies that the second part of equation~\eqref{eq:secc1} is null and  $\ddot \XX^* = 0$. 
Using a similar argument we conclude that a steady state of DAHS is also a steady state of AHS. 
\end{proof}

\subsection{Previous approaches}\label{sec:NestedAlg}

A common approach to solve the generic minimization problems~\eqref{eq:GenericProblem} and~\eqref{eq:vel1}-\eqref{eq:vel2} is based on the linearization of the constraint functions $\phi_{k\ell}$ around a certain configuration $\XX^p$, which we denote by $\phi_{k\ell}^p(\XX)$, i.e, 
\begin{eqnarray}
\phi_{k\ell}^p(\XX) = \phi_{k\ell}(\XX^p) + \nabla_{\XX} \phi_{k\ell}(\XX^p) \cdot (\XX - \XX^p).\label{eq:linearizationConstraints}
\end{eqnarray}
The solution $\XX^{p+1}$ of the resulting linearly constrained optimization problem is used to improve the linearization of the constraint functions and this process is iterated until convergence.
Note that this transformation turns the non-convex minimization problems~\eqref{eq:GenericProblem} and~\eqref{eq:vel1}-\eqref{eq:vel2} into a sequence of convex problems, for which there are many tools available~\cite{bertsekas1999nonlinear}.
We have chosen the Arrow-Hurwicz algorithm, however, any other method for convex optimization problems would suit our purpose.

This method belongs to the class of linearly constrained Lagrangian (LCL) methods~\cite{nocedal2006numerical} which have been used for large constrained optimization problems.

\subsubsection{The nested algorithm for the positions (NAP)}
 
Consider the system~\eqref{eq:systemSaddlePoint} with linearized constraint functions.
We propose the following definition.

\begin{definition}[\textbf{Nested Algorithm for the Positions (NAP)}]  Let $(\XX^p, \llambda^p)$ be given.
Define $\XX^{p,0}= \XX^p$, $\llambda^{p,0}= \llambda^p$ and $\phi_{k\ell}^p$ as in~\eqref{eq:linearizationConstraints}.
For a given $(\XX^{p,n}, \llambda^{p,n})$, let the step of the inner-loop be defined as
\begin{numcases}{}
X_i^{p,n+1}          &$= X_i^{p,n} - \alpha \left[\nabla_{X_i}W(\XX^{p,n})  + \sum\limits_{k,\ell \in \{1,...,N\},\ k<\ell} \lambda_{k\ell}^{p,n} \nabla_{X_i}\phi_{k\ell}^p(\XX^{p,n})\right],$\nonumber\\
&$\hspace{6cm} \ i=1,...,N$ \label{eq:NestedInnerloop1}\\
\lambda_{k\ell}^{p,n+1} &$= \max\left\{0,\lambda_{k\ell}^{p,n} + \beta \phi_{k\ell}^p(\XX^{p,n+1}) \right\},\  k,\ell = 1,..., N,\ k< \ell,$
\label{eq:NestedInnerloop2}
\end{numcases}
then $(\XX^{p+1},\llambda^{p+1}) = \lim_{n\to \infty} (\XX^{p,n}, \llambda^{p,n})$.
\end{definition}

If we only compute one step of the inner-loop per iteration of the outer-loop we get a variant of the AHA formulation, where $\phi_{k\ell}(\XX^{p+1})$ is replaced by $\phi_{k\ell}^p(\XX^{p+1})$ in~\eqref{eq:NestedInnerloop2}.

\subsubsection{The nested algorithm for the velocities (NAV)}

We consider the minimization problem~\eqref{eq:systemSaddlePoint_V} with linearized constraint functions.

\begin{definition}[\textbf{Nested Algorithm for the Velocities (NAV)}] Let $\tau > 0$ and $(\XX^p, \VV^p, \mmu^p)$ be given.
Define $\VV^{p,0}= \VV^p$, $\mmu^{p,0}= \mmu^p$ and $\phi_{k\ell}^p$ as in~\eqref{eq:linearizationConstraints}.
For a given $(\VV^{p,n}, \mmu^{p,n})$, let the step of the inner-loop be defined as
\begin{numcases}{}
\hspace{-0.5cm}&$V^{p,n+1}_i       = V^{p,n}_i$\nonumber\\
\hspace{-0.5cm}& $ - \alpha \left(  V_i^{p,n} +\nabla_{X_i}W(\XX^p) +\tau\sum\limits_{k,\ell \in \{1,...,N\},\ k<\ell} \mmu_{k\ell}^{p,n} \nabla_{X_i} \phi_{k\ell}^p(\XX^p + \tau\VV^{p,n})\right),$ \nonumber\\
\hspace{-0.5cm} &\hspace{6cm}$\ i=1,...,N$ \label{eq:velInnerLoop1}\\
\hspace{-0.5cm}&$\mu^{p,n+1}_{k\ell}  =  \max\left\{0, \mu_{k\ell}^{p,n}+ \beta\phi_{k\ell}^p(\XX^p + \tau\VV^{p,n+1})\right\},k,\ell = 1,...,N,$\label{eq:velInnerLoop2}
\end{numcases}
then $(\VV^{p+1},\mmu^{p+1}) = \lim_{n\to \infty} (\VV^{p,n}, \mmu^{p,n})$ and $\XX^{p+1}         = \XX^p + \tau\ \VV^{p+1}$.
\end{definition}

The NAV corresponds to an adaptation of the  method developed by Maury in~\cite{Maury2006}.
If at each iteration of the outer-loop, we only perform one iteration of the inner-loop, we get a combined scheme, which does not seem to converge for $N$ large.

\subsection{Smooth and non-smooth form of the constraint functions}

The non-overlapping constraints for a system of identical spheres in $\RR^b$ can be expressed by means of a smooth or a non-smooth function as specified bellow, both leading to equivalent constraints.

\begin{definition}
We call \textbf{non-smooth form of the constraint functions (NS)} the following function
$$\phi_{k\ell}(\XX) = d - |X_k - X_{\ell}|,k,\ell=1,...,N,\ k\neq \ell$$ 
and \textbf{smooth form of the constraint functions (S)} the following function 
$$\phi_{k\ell}(\XX) = d^2 - |X_k - X_{\ell}|^2,k,\ell=1,...,N,\ k\neq \ell.$$ 
\end{definition}\label{def:S/NS}

\section{Linear analysis}
\label{sec:linearAnalysis}
\setcounter{equation}{0}

\subsection{Preliminaries}

In the following, we carry out the linear stability analysis of the associated ODE systems in order to study the convergence of the solution towards a steady state. We consider here the particular physical system where $N$ rigid spheres in $\RR^b$ attract each other through a global potential which is given by a quadratic function of the relative distance, 
\begin{eqnarray}
W(\XX) = \frac{1}{2N}\sum_{i,j \in \{1,...,N\},\ i<j} |X_i-X_j|^2. \label{eq:W}
\end{eqnarray}

\begin{definition}
A steady state $x^*$ of the ODE system $\dot x = f(x),\ t\geq 0,$ is called
\begin{itemize}
\item \textbf{stable} (in the sense of Lyapunov) if for all $ \epsilon >0$, there exists a $\delta > 0$ such that $\| \bar x(0) - x^* \| < \delta$ implies $\|\bar  x(t)-x^*\| < \epsilon,$ for all $t>0$ and for all solution $\bar x$;
\item \textbf{asymptotically stable} if it is stable and $\lim_{t\to \infty} \|\bar x(t) - x^* \| = 0$;
\item \textbf{unstable} if it is not stable.
\end{itemize}
\end{definition}

Note that this definition assumes that the initial configuration is chosen close enough to the steady state.
The next theorem allow us to obtain conclusions about the original nonlinear system from the corresponding linearized system.

\begin{theorem}\label{thm:linearStability}
Consider the ODE system $\dot x = f(x)$ and a steady state $x^*$, where $f$ is smooth at $x^*$.
If $x^*$ is an asymptotically stable (unstable) solution of the linearized system about $x^*$, i.e., $\dot{ \tilde x} = f'(x^*)(\tilde x - x^*)$, then it is an asymptotically stable (unstable) solution of the original system.
\end{theorem}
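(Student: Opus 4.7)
The plan is to translate coordinates to the equilibrium and combine a Taylor expansion of $f$ with Lyapunov's second method. Setting $y = x - x^*$ and using smoothness of $f$ at $x^*$ together with $f(x^*)=0$, the nonlinear system becomes
\begin{equation*}
\dot y = A y + g(y), \qquad A = f'(x^*), \qquad \|g(y)\| = o(\|y\|) \text{ as } y\to 0.
\end{equation*}
So the problem reduces to showing that adding a higher-order perturbation $g$ does not destroy the stability/instability properties dictated by the linear part $A$.

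For the asymptotically stable case, linear asymptotic stability forces every eigenvalue of $A$ to have strictly negative real part, so $A$ is Hurwitz. Then the Lyapunov matrix equation $A^{T} P + P A = -I$ has a unique symmetric positive definite solution $P$, given explicitly by $P = \int_0^\infty e^{A^{T} t} e^{A t}\, dt$. I would take $V(y) = y^{T} P y$ as a Lyapunov function. A direct computation along trajectories gives
\begin{equation*}
\dot V(y) = -\|y\|^2 + 2\, y^{T} P\, g(y).
\end{equation*}
Since $\|g(y)\| \le \varepsilon \|y\|$ for $\|y\|$ small enough, the perturbative term is bounded by $2\|P\|\,\varepsilon\,\|y\|^2$; choosing $\varepsilon < 1/(4\|P\|)$ yields $\dot V(y) \le -\tfrac12 \|y\|^2$ on a neighbourhood of the origin. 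The classical Lyapunov theorem then delivers asymptotic stability of $x^*$ for the nonlinear system.

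For the unstable case, I would invoke Chetaev's instability theorem. Linear instability (combined with smoothness) ensures that $A$ has at least one eigenvalue with positive real part. Split $\RR^{bN} = E_{u} \oplus E_{s}$ into the $A$-invariant unstable and (center-)stable subspaces, solve two Lyapunov equations separately on each block to obtain symmetric positive definite $P_{u}$ and $P_{s}$, and set $V(y) = y_{u}^{T} P_{u} y_{u} - y_{s}^{T} P_{s} y_{s}$ where $y = y_{u} + y_{s}$ is the induced splitting. The set $\{V > 0\}$ is an open cone with $0$ on its boundary, and the same Hurwitz-type estimate as above, applied separately on each block, yields $\dot V(y) \ge c \|y\|^2$ in this cone for $\|y\|$ small. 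Chetaev's theorem then forces trajectories starting arbitrarily close to $0$ inside the cone to escape a fixed neighbourhood, proving instability.

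I expect the main obstacle to be the instability half: setting up the invariant splitting cleanly, controlling the cross terms produced by $g(y)$ when projected onto $E_{u}$ and $E_{s}$, and checking the hypotheses of Chetaev's theorem (positivity of $V$ and $\dot V$ on a common cone) all require a more careful estimation than the stable case, where a single quadratic Lyapunov function suffices.
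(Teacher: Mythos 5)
Your stable half is correct and standard: reducing to $\dot y = Ay+g(y)$ with $\|g(y)\|=o(\|y\|)$, solving $A^{T}P+PA=-I$ for a positive definite $P$, and absorbing the perturbation into $-\tfrac12\|y\|^{2}$ is exactly Lyapunov's indirect method. The paper itself offers no argument here (its proof is the citation to Chicone, Thm.~2.42), so on this half you have supplied a complete proof of what is merely quoted.

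The instability half has a genuine gap at its very first step. Lyapunov instability of the linear system $\dot y = Ay$ does \emph{not} imply that $A$ has an eigenvalue with positive real part: a nontrivial Jordan block on the imaginary axis, e.g.\ $A=\left(\begin{smallmatrix}0&1\\0&0\end{smallmatrix}\right)$, produces polynomially growing solutions, hence an unstable linear system, while $\sigma(A)=\{0\}$. Worse, in exactly this situation the theorem's conclusion can fail, so no proof can close the gap: for $\dot y_{1}=y_{2}$, $\dot y_{2}=-y_{1}^{3}-y_{2}^{3}$ the function $E=\tfrac12 y_{2}^{2}+\tfrac14 y_{1}^{4}$ satisfies $\dot E=-y_{2}^{4}\le 0$, so the origin is Lyapunov stable for the nonlinear system even though its linearization is unstable. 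The instability assertion is therefore only valid under the stronger spectral hypothesis $\max\operatorname{Re}\sigma(A)>0$ (which is how the cited theorem is actually stated, and the only way the paper ever uses it); you cannot derive that hypothesis from linear instability plus smoothness, and your proof should take it as the assumption. Granting it, your Chetaev construction is the right idea, but note a second technical point: if $E_{s}$ contains imaginary-axis eigenvalues, the Lyapunov equation $A_{s}^{T}P_{s}+P_{s}A_{s}=-I$ has no positive definite solution. You should instead pick $0<\alpha<\min\operatorname{Re}\sigma(A_{u})$ and solve the shifted equations associated with $A_{u}-\alpha I$ and $-(A_{s}-\alpha I)$, so that $\dot V$ gains a uniform $\alpha\|y\|^{2}$ margin on the cone $\{V>0\}$ that dominates both the cross terms and the $o(\|y\|)$ perturbation.
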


\begin{proof}
See~\cite{Chicone1999}, thm. 2.42, p. 158.
\end{proof}

In order to ensure convergence of the ODE system towards a steady state, we only need to ensure that the eigenvalues of $f'(x^*)$ all have negative real part. If at least one eigenvalue has positive real part, then $x^*$ is unstable, and if all eigenvalues are pure imaginary, then $x^*$ is a {\it center equilibrium}, i.e. if a solution starts near it then it will be periodic around it. In the latter case, we can not conclude anything about the nonlinear system.
The analysis presented next is made for the case of two spheres in $\RR$.

\subsection{The Arrow-Hurwicz algorithm (AHA)}\label{sec:DAHAanal}

\subsubsection{AHA-NS}\label{sec:UZnon-smooth}

Let $\phi(X) = d - |X|$ and consider the potential~\eqref{eq:W}. The ODE system associated to the DAHA-NS in the case of two spheres in $\RR$ where one sphere is fixed at the origin can be written as
\begin{numcases}{}
\dot X       & $=\ - \alpha \left(1 - \frac{\lambda}{|X|}\right)X $\label{eq:UzNon-smooth1Shpere1}\\
 \dot \lambda &$=
 \begin{cases}
 0,  \mbox{ if } \lambda =0 \text{ and } d < |X|\\
 \beta (d - |X|), \mbox{ otherwise}.
 \end{cases} $\label{eq:UzNon-smooth1Sphere2}
\end{numcases}

\begin{lemma}
The steady states of the system~\eqref{eq:UzNon-smooth1Shpere1}-\eqref{eq:UzNon-smooth1Sphere2}, $(X^*,\lambda^*) = (d , d)$ and $(X^*,\lambda^*) = (-d , d)$, are both asymptotically stable, for any $\alpha$ and $\beta$ positive.
\end{lemma}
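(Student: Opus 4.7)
The plan is to invoke Theorem~\ref{thm:linearStability} and reduce the question to checking the sign of the real parts of the eigenvalues of the Jacobian at each steady state. The first thing to notice is that although the right-hand side of~\eqref{eq:UzNon-smooth1Sphere2} is piecewise-defined and $|X|$ is non-smooth at the origin, both issues disappear in a neighbourhood of the two steady states: since $\lambda^{*}=d>0$ we are locally in the ``otherwise'' branch of~\eqref{eq:UzNon-smooth1Sphere2}, and since $|X^{*}|=d\neq 0$ the function $X\mapsto|X|$ is $C^{\infty}$ on a small open interval around $X^{*}$. So the linearisation theorem applies cleanly.

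Next I would write out the smooth local form of the system. Expanding $\dot X=-\alpha X+\alpha\lambda\,X/|X|$ and using the sign of $X^{*}$, the system near $(d,d)$ reads $\dot X=-\alpha(X-\lambda)$, $\dot\lambda=\beta(d-X)$, and near $(-d,d)$ it reads $\dot X=-\alpha(X+\lambda)$, $\dot\lambda=\beta(d+X)$. A direct computation then gives the Jacobians
\begin{equation*}
J_{+}=\begin{pmatrix}-\alpha & \alpha \\ -\beta & 0\end{pmatrix},\qquad
J_{-}=\begin{pmatrix}-\alpha & -\alpha \\ \beta & 0\end{pmatrix},
\end{equation*}
at $(d,d)$ and $(-d,d)$ respectively.

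Now the key step: both Jacobians have trace $-\alpha<0$ and determinant $\alpha\beta>0$. For a $2\times 2$ real matrix, this is equivalent (by Routh--Hurwitz, or by solving the characteristic polynomial $\mu^{2}+\alpha\mu+\alpha\beta=0$ explicitly) to both eigenvalues having strictly negative real part, for every $\alpha,\beta>0$. Hence each steady state is asymptotically stable for the linearised system, and Theorem~\ref{thm:linearStability} transfers this conclusion to the original nonlinear system~\eqref{eq:UzNon-smooth1Shpere1}--\eqref{eq:UzNon-smooth1Sphere2}.

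The only mildly delicate point is the justification that Theorem~\ref{thm:linearStability} is legitimately applicable, given the apparent non-smoothness of the dynamics; but as observed above, on a sufficiently small neighbourhood of the equilibrium the vector field coincides with the smooth one whose Jacobian we computed, so no extra work is required. The two eigenvalues $\mu_{\pm}=\tfrac12\bigl(-\alpha\pm\sqrt{\alpha^{2}-4\alpha\beta}\bigr)$ are either a complex conjugate pair with real part $-\alpha/2$ (when $\alpha<4\beta$) or two distinct negative reals (when $\alpha>4\beta$), which in passing shows that the convergence may be oscillatory or monotone depending on the ratio $\alpha/\beta$.
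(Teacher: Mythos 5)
Your proposal is correct and follows essentially the same route as the paper: reduce to the smooth local form near each equilibrium, compute the $2\times 2$ Jacobian (which matches the paper's matrix $A=\begin{pmatrix}-\alpha & \alpha\\ -\beta & 0\end{pmatrix}$ up to the sign convention for the $X<0$ branch), and conclude from the characteristic polynomial $\mu^{2}+\alpha\mu+\alpha\beta$ having only roots with negative real part. Your explicit remarks on why the piecewise definition and the non-smoothness of $|X|$ are harmless near the equilibria, and your treatment of both steady states rather than appealing to symmetry, are welcome but do not change the argument.
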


\begin{proof}
Since the dynamics around each steady state is identical, we only need to carry out the analysis of the first steady state.
Suppose $X>0$ and consider the change of variables $Y=X-d$ and $\mu = \lambda - d$.
The system on the new variables is given in matrix form by
$$ \begin{bmatrix}
\dot Y\\
\dot \mu
\end{bmatrix}
=
A
\begin{bmatrix}
Y\\
\mu
\end{bmatrix},
\ 
A=
\begin{bmatrix}
-\alpha  &  \alpha\\
-\beta   & 0
\end{bmatrix}.
$$
We want the eigenvalues of matrix A to be real and negative in order to have a fast convergence to the steady state. 
The roots of the characteristic polynomial $\PP(\lambda) = \lambda^2 + \alpha\lambda + \beta \alpha,$ have both negative real part, therefore the steady state is asymptotically stable.
\end{proof}

Any solution to the ODE system~\eqref{eq:UzNon-smooth1Shpere1}-\eqref{eq:UzNon-smooth1Sphere2} converges to a steady state for all $\alpha, \beta>0$ and the fastest convergence is achieved when $\alpha = 4\beta$.
Contrarily to the one dimensional case, in higher spatial dimensions the constraints are no longer piecewise linear. Consequently, we can not directly extrapolate the conclusions drawn in this section.
In particular, in dimension $b=2$, the numerical simulations show oscillations around the steady state for $N>3$ without never converging to it.

\subsubsection{AHA-S}~\label{sec:UZsmooth}

Let $\phi(X) = d^2 - |X|^2$  and consider the potential~\eqref{eq:W}.
The ODE system associated to the AHA-S in the case of two spheres in $\RR$ where one sphere is fixed at the origin can be written as
\begin{numcases}{}
\dot X       &$= - \alpha \left(1 - 2\lambda\right)X$\label{eq:nonlinearSmooth1} \\
\dot \lambda &$=
\begin{cases}
0, \text{ if }\lambda = 0 \mbox{ and } d < |X|\\
\beta (d^2 - X^2),  \mbox{ otherwise.} \\
 \end{cases}$\label{eq:nonlinearSmooth2}
\end{numcases}

\begin{lemma} The steady states of the system corresponding to the linearization of~\eqref{eq:nonlinearSmooth1}-\eqref{eq:nonlinearSmooth2}, $(X^*,\lambda^*) = (d , 1/2)$ and $(X^*,\lambda^*) = (-d , 1/2)$, are both center equilibria, for any $\alpha$ and $\beta$ positive. 
\end{lemma}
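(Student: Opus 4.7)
The plan is to mimic the argument used for the AHA-NS case: linearize the ODE system around the steady state, read off the matrix, and show that its eigenvalues are purely imaginary. By symmetry ($X \mapsto -X$), the two steady states $(d, 1/2)$ and $(-d, 1/2)$ have identical local dynamics, so I would treat only $(d, 1/2)$ in detail. Near this point the constraint $\phi(X) = d^2 - X^2$ is active, so the second branch of \eqref{eq:nonlinearSmooth2} governs the dynamics of $\lambda$ (this should be checked briefly: close to the steady state the trajectory cannot escape to $|X| > d$ while keeping $\lambda$ near $1/2$, since the $\lambda$-equation would then reduce to $\dot\lambda = 0$, leaving a one-dimensional flow that is immediately pushed back inside by the $X$-equation).

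I would then introduce the shifted variables $Y = X - d$ and $\mu = \lambda - 1/2$. Substituting into \eqref{eq:nonlinearSmooth1}--\eqref{eq:nonlinearSmooth2} gives
\begin{align*}
\dot Y &= -\alpha(1 - 2(\mu + 1/2))(Y+d) = 2\alpha\mu(Y+d), \\
\dot \mu &= \beta(d^2 - (Y+d)^2) = -\beta(2dY + Y^2).
\end{align*}
Dropping the quadratic terms $2\alpha\mu Y$ and $\beta Y^2$ produces the linearized system
\[
\begin{bmatrix} \dot Y \\ \dot \mu \end{bmatrix}
= A \begin{bmatrix} Y \\ \mu \end{bmatrix},
\qquad
A = \begin{bmatrix} 0 & 2\alpha d \\ -2\beta d & 0 \end{bmatrix}.
\]

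The characteristic polynomial of $A$ is $\mathcal{P}(\sigma) = \sigma^2 + 4\alpha\beta d^2$, whose roots $\sigma = \pm 2 i d\sqrt{\alpha\beta}$ are purely imaginary for all $\alpha, \beta > 0$. Following the classification recalled just after Theorem~\ref{thm:linearStability}, this makes $(d, 1/2)$ a center equilibrium of the linearized system, and the same computation (with $Y = X + d$) yields the analogous result at $(-d, 1/2)$.

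No serious obstacle is expected here: the linearization is completely elementary, and the only mildly delicate point is justifying the choice of branch in \eqref{eq:nonlinearSmooth2}, which I would dispatch with the short argument above. Note, in contrast to the NS case, that Theorem~\ref{thm:linearStability} does not allow us to transfer a conclusion about the nonlinear system from a center equilibrium of the linearization, which is precisely the pathology motivating the damped algorithm in Section~\ref{sec:DAHA}.
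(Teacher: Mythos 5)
Your proposal is correct and follows essentially the same route as the paper: the same shift $Y=X-d$, $\mu=\lambda-1/2$, the same linearized matrix $A=\begin{pmatrix}0 & 2d\alpha\\ -2d\beta & 0\end{pmatrix}$, and the same characteristic polynomial $\sigma^2+4\alpha\beta d^2$ with purely imaginary roots. The extra remarks on the active branch of the $\lambda$-equation and on the non-transferability of the center conclusion to the nonlinear system are consistent with the paper's surrounding discussion.
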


\begin{proof}
As before, we will only carry out the analysis of the first steady state.

Suppose $X>0$ and consider the change of variables $Y=X-d$ and $\mu = \lambda - 1/2$.
The linearized system on the new variables is given in matrix form by
$$ \begin{bmatrix}
\dot Y\\
\dot \mu
\end{bmatrix}
=
A
\begin{bmatrix}
Y\\
\mu
\end{bmatrix},
\ 
A=
\begin{bmatrix}
0          & 2d\alpha \\
-2d\beta   & 0
\end{bmatrix}.
$$
The roots of the characteristic polynomial $\PP(\lambda) = \lambda^2 + 4d^2\alpha\beta$ are both purely imaginary, therefore the steady state of the linearized system is a center equilibrium.
\end{proof}
\begin{figure}[!h]
\centering
\includegraphics[scale=0.6]{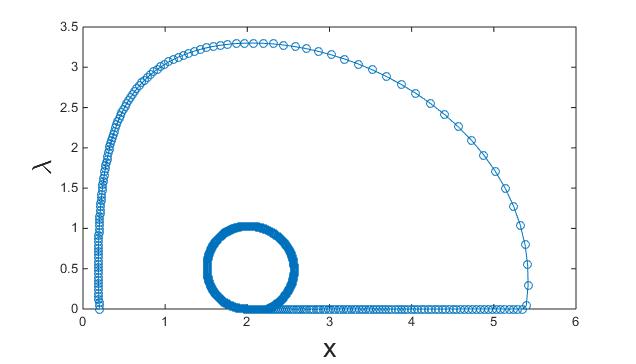}
\caption{Phase portrait of the system~\eqref{eq:nonlinearSmooth1}-\eqref{eq:nonlinearSmooth2} with $(\alpha, \beta, d) = (0.01,0.01,2) $ and initial condition $X^0 = 0.2$. The dynamics do not converge to the equilibrium $(2,\frac12)$.}
\label{fig:phasePortraitAH}
\end{figure}

The linear analyses does not allow us to conclude anything about the asymptotic behaviour of the nonlinear system (see thm.~\ref{thm:linearStability}).
Nevertheless, the phase portrait plotted in Figure~\ref{fig:phasePortraitAH} reveals that a solution to the nonlinear system should converge towards a periodic orbit around the steady state.
As we will see in the next section, the damping term applied to the Arrow-Hurwicz system~\eqref{eq:AHSystem1}-\eqref{eq:AHSystem2} ensures asymptotic stability of the steady state, under certain conditions on the parameters.

\subsection{The damped Arrow-Hurwicz algorithm (DAHA)}\label{sec:DAHAanal}

\subsubsection{DAHA-NS}\label{sec:DAHSnon-smooth}

Let $\phi(X) = d - |X|$ and consider the potential~\eqref{eq:W}. 
The ODE system associated to the DAHA-NS in the case of two spheres in $\RR$ where one sphere is fixed at the origin can be written as
\begin{numcases}{}
\ddot{ X }      &$= -\alpha^2 \left(1- \frac{\lambda}{|X|} \right)X
                   +\alpha\beta \lambda(d - |X|)\frac{X}{|X|}
                   -c\dot X $\label{eq:DAHSnon-smooth1Sphere1}\\
\dot \lambda &$= 
\begin{cases}
0, \text{ if }\lambda = 0 \mbox{ and } d < |X|,\\
\beta (d - |X|), \mbox{ otherwise}.
 \end{cases}$\label{eq:DAHSnon-smooth1Sphere2}
\end{numcases}

\begin{lemma}
Let $ \alpha, \beta, c>0$. If $(\alpha+\beta d) c -\beta \alpha > 0$, then the steady states of the system~\eqref{eq:DAHSnon-smooth1Sphere1}-\eqref{eq:DAHSnon-smooth1Sphere2}, $(X^*,\dot{X}^*,\lambda^*)=(d,0,d)$ and $(X^*,\dot{X}^*,\lambda^*) = (-d,0,d)$, are both asymptotically stable.
\end{lemma}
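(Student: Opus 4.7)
By the obvious symmetry $X \mapsto -X$ of the system, it suffices to analyze the steady state $(X^*,\dot X^*,\lambda^*)=(d,0,d)$. In a neighbourhood of this point we have $X>0$ and $\lambda>0$, so $|X|=X$ and the second branch of \eqref{eq:DAHSnon-smooth1Sphere2} is active, meaning that the dynamics are smooth and the linearization procedure justified by Theorem~\ref{thm:linearStability} applies.

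Next I would introduce the perturbation variables $Y=X-d$, $\mu=\lambda-d$, together with $Z=\dot Y$, in order to rewrite the second-order system \eqref{eq:DAHSnon-smooth1Sphere1}--\eqref{eq:DAHSnon-smooth1Sphere2} as a first-order system in $(Y,Z,\mu)$. A short Taylor expansion at $Y=\mu=0$ gives
\begin{align*}
\Bigl(1-\tfrac{\lambda}{|X|}\Bigr)X &= Y-\mu+O(|(Y,\mu)|^2),\\
\lambda\,(d-|X|)\,\tfrac{X}{|X|} &= -d\,Y+O(|(Y,\mu)|^2),\\
\beta(d-|X|) &= -\beta Y,
\end{align*}
so the Jacobian of the resulting system at the origin is
\[
A=\begin{pmatrix} 0 & 1 & 0 \\ -(\alpha^{2}+\alpha\beta d) & -c & \alpha^{2} \\ -\beta & 0 & 0 \end{pmatrix}.
\]

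A direct cofactor expansion then yields the characteristic polynomial
\[
\mathcal{P}(s)=s^{3}+c\,s^{2}+(\alpha^{2}+\alpha\beta d)\,s+\alpha^{2}\beta .
\]
At this point I would invoke the Routh--Hurwitz criterion for cubics: $s^{3}+a_{2}s^{2}+a_{1}s+a_{0}$ has all roots in the open left half-plane iff $a_{2},a_{1},a_{0}>0$ and $a_{2}a_{1}>a_{0}$. With $a_{2}=c$, $a_{1}=\alpha^{2}+\alpha\beta d$ and $a_{0}=\alpha^{2}\beta$, positivity is immediate from $\alpha,\beta,c>0$, and the last condition $c(\alpha^{2}+\alpha\beta d)>\alpha^{2}\beta$ simplifies, after dividing by $\alpha>0$, to $(\alpha+\beta d)c-\alpha\beta>0$, which is exactly the hypothesis of the lemma. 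All eigenvalues of $A$ therefore have strictly negative real part, and Theorem~\ref{thm:linearStability} yields asymptotic stability of $(d,0,d)$, hence of $(-d,0,d)$ by symmetry.

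The main obstacle is less the algebra than the careful handling of the non-smooth term $|X|$: one must argue that the steady state lies in the interior of the region where $X>0$ and $\lambda>0$, so that the Heaviside switch in \eqref{eq:DAHSnon-smooth1Sphere2} is locally inactive and the vector field is $C^{1}$ there; only then is the application of Theorem~\ref{thm:linearStability} legitimate. The remaining verification---expanding the nonlinear terms to first order and matching the Routh--Hurwitz condition with the stated inequality---is straightforward once the correct linearization is in place.
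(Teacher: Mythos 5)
Your proposal is correct and follows essentially the same route as the paper: the same change of variables $(Y,Z,\mu)=(X-d,\dot X,\lambda-d)$, the same Jacobian $A$, the same characteristic polynomial $s^{3}+cs^{2}+(\alpha^{2}+\alpha\beta d)s+\alpha^{2}\beta$, and the same sufficient condition, concluding via Theorem~\ref{thm:linearStability}. The only difference is cosmetic: you invoke the Routh--Hurwitz criterion for cubics directly, whereas the paper re-derives that criterion by hand from the symmetric functions of the roots; your explicit remark that the Heaviside switch and the absolute value are locally inactive near $(d,0,d)$ is a welcome addition that the paper leaves implicit.
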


\begin{proof}
Suppose $X>0$ and consider the change of variables $Y=X-d$, $Z=\dot Y$ and $\mu = \lambda - d$.
The linearized system on the new variables is given in matrix form by
$$ \begin{bmatrix}
\dot Y\\
\dot Z\\
\dot \mu
\end{bmatrix}
=
A
\begin{bmatrix}
Y\\
Z\\
\mu
\end{bmatrix},
\ \ \ A=
\begin{bmatrix}
0                        &   1  &   0\\
-\alpha^2-\alpha\beta d  &  -c  &  \alpha^2\\
-\beta                   &  0   & 0
\end{bmatrix}.
$$  
The eigenvalues of matrix A are the roots of the characteristic polynomial in  $\lambda$, which is given by
$\PP(\lambda) = \lambda^3 + c\lambda^2 + (\alpha^2+\alpha\beta d)\lambda + \beta \alpha^2.$
Consider in general a cubic polynomial of the form
$\PP(\lambda) = \lambda^3 + c_2\lambda^2 + c_1\lambda + c_0,$
with $c_0,c_1,c_2 \in \RR^+$.
Let $z_1, z_2$ and $z_3$ be the (complex) roots to this polynomial.
We want to ensure that all roots have negative real part.
Since all coefficients are positive, if the roots are real then they must be negative.
Suppose now that two roots are complex conjugate, for example, $z_1 = a+ib,\ z_2 = a-ib$, $a,b \in \RR$ and $z_3 \in \RR^-$.
In order to find a condition on the coefficients which ensures that $a$ is non-positive, we start by identifying the coefficients of the equation with its roots:
$$
z_1+z_2+z_3          = -c_2,\ 
z_1z_2+z_1z_3+z_2z_3 = c_1,\ 
z_1z_2z_3            = -c_0
$$

Rewriting in terms of $a,b$ and $z_3$ we get
\begin{eqnarray}
2a+z_3          = -c_2,\ 
a^2+b^2+2az_3   = c_1,\ 
(a^2+b^2)z_3    = -c_0\label{eq:roots}
\end{eqnarray}
From \eqref{eq:roots} we deduce that $a$ satisfies the cubic polynomial
$$8a^3+8c_1a^2+2(c_1+c_2^2)a+c_1c_2-c_0=0.$$
Consequently, if $c_1c_2-c_0>0$, then $a$ is necessarily negative.

Back to our case, we have
$c_2          = c,\ c_1          = \alpha^2+\alpha\beta d$ and $c_0          = \beta\alpha^2$
and a sufficient condition for the steady state to be asymptotically stable is
$(\alpha^2+\alpha\beta d)c-\beta\alpha^2>0$, i.e., $(\alpha+\beta d)c-\beta\alpha>0$.
Note that since the steady state is asymptotically stable as a solution to the linearized system, then it is also asymptotically stable (see thm.~\ref{thm:linearStability}).
\end{proof}

\subsubsection{DAHA-S}\label{sec:DAHSsmooth}

Let $\phi(X) = d^2 - |X|^2$ and consider the potential~\eqref{eq:W}.
For the case of two spheres in $\RR$ where one sphere is fixed at the origin, the ODE system associated to the DAHA-S can be written as
\begin{numcases}{}
\ddot{ X }      &$= -\alpha^2 \left(1- 2\lambda \right)X
                  +2\alpha\beta \lambda(d^2 - |X|^2)X
                   -c\dot X$\label{eq:DAHSsmooth1Sphere1}\\
\dot \lambda  &$=
 \begin{cases}
  0, \text{ if }\lambda       =0 \mbox{ and } d < |X|\\
 \beta (d^2 - |X|^2), \mbox{ otherwise}.
 \end{cases}$\label{eq:DAHSsmooth1Sphere2}
\end{numcases}

\begin{lemma}
Let $\alpha, \beta, c>0$. If $c-2\alpha>0$, then the steady states of the system~\eqref{eq:DAHSsmooth1Sphere1}-\eqref{eq:DAHSsmooth1Sphere2}, $(X^*,\dot{X}^*,\lambda^*)=(d,0,1/2)$ and $(X^*,\dot{X}^*,\lambda^*) = (-d,0,1/2)$, are both asymptotically stable.
\end{lemma}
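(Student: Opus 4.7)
The plan is to mirror exactly the strategy used for the DAHA-NS lemma: reduce to one steady state by symmetry, linearize, extract the characteristic polynomial of the Jacobian, and apply the cubic Routh–Hurwitz criterion already established in the previous proof.

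First, since the nonlinearity in \eqref{eq:DAHSsmooth1Sphere1}-\eqref{eq:DAHSsmooth1Sphere2} is even in $X$, it suffices to analyze the steady state $(d,0,1/2)$. I would introduce the shifted variables $Y=X-d$, $Z=\dot{Y}$, $\mu=\lambda-1/2$ and compute the Jacobian of the right-hand side at the equilibrium. A short computation gives the partial derivatives of $f(X,\dot X,\lambda)=-\alpha^2(1-2\lambda)X+2\alpha\beta\lambda(d^2-X^2)X-c\dot X$ at $(d,0,1/2)$, namely $\partial_X f=-2\alpha\beta d^2$, $\partial_{\dot X} f=-c$, $\partial_\lambda f=2\alpha^2 d$, and $\partial_X g=-2\beta d$ for $g(X,\lambda)=\beta(d^2-X^2)$. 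Thus the linearized system reads
\begin{equation*}
\begin{bmatrix}\dot Y\\ \dot Z\\ \dot\mu\end{bmatrix}
=A\begin{bmatrix}Y\\ Z\\ \mu\end{bmatrix},\qquad
A=\begin{bmatrix}0 & 1 & 0\\ -2\alpha\beta d^2 & -c & 2\alpha^2 d\\ -2\beta d & 0 & 0\end{bmatrix}.
\end{equation*}

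Next, I would expand $\det(A-\lambda I)$ along the first row to obtain the characteristic polynomial
\begin{equation*}
\mathcal{P}(\lambda)=\lambda^3+c\lambda^2+2\alpha\beta d^2\,\lambda+4\alpha^2\beta d^2,
\end{equation*}
whose coefficients are all strictly positive. At this point I would simply invoke the auxiliary cubic criterion established in the proof of the DAHA-NS lemma: for a cubic $\lambda^3+c_2\lambda^2+c_1\lambda+c_0$ with positive coefficients, the condition $c_1c_2-c_0>0$ forces all roots to have negative real part. Applying this with $c_2=c$, $c_1=2\alpha\beta d^2$, $c_0=4\alpha^2\beta d^2$ yields $2\alpha\beta d^2(c-2\alpha)>0$, which is equivalent to the assumed hypothesis $c-2\alpha>0$. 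Asymptotic stability of the linearized system then transfers to the nonlinear one via Theorem~\ref{thm:linearStability}.

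I do not anticipate a real obstacle here, since the argument is an almost line-for-line adaptation of the DAHA-NS proof. The only places where one has to be careful are (i) the differentiation of the quadratic constraint term $2\alpha\beta\lambda(d^2-X^2)X$ at the equilibrium $X=d$, where the product rule produces the factor $-2d^2$ that determines $c_1$, and (ii) keeping track of the fact that $\lambda^*=1/2$ here rather than $\lambda^*=d$ as in the non-smooth case, which changes the $\partial_\lambda f$ entry to $2\alpha^2 d$. Once the matrix $A$ is correctly assembled, the rest is a direct appeal to the already-proved cubic stability criterion.
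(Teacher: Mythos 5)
Your proof is correct and follows essentially the same route as the paper: the same change of variables, the same linearized matrix $A$, the same characteristic polynomial $\lambda^3+c\lambda^2+2\alpha\beta d^2\lambda+4\alpha^2\beta d^2$, and the same appeal to the cubic criterion $c_1c_2-c_0>0$ established in the DAHA-NS proof, yielding $c-2\alpha>0$. The only difference is that you spell out the Jacobian entries explicitly, which the paper leaves implicit.
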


\begin{proof}
As before, suppose $X>0$ and consider the change of variables $Y=X-d$, $Z=\dot Y$ and $\mu = \lambda - 1/2$.
The linearized system on the new variables is given in matrix form by

$$ \begin{bmatrix}
\dot Y\\
\dot Z\\
\dot \mu
\end{bmatrix}
=
A
\begin{bmatrix}
Y\\
Z\\
\mu
\end{bmatrix},
\ \ 
A=
\begin{bmatrix}
0                        &   1  &   0\\
-2\alpha\beta d^2        &  -c  &  2d\alpha^2\\
-2d\beta                 &  0   & 0
\end{bmatrix}.
$$

The eigenvalues of matrix A are the roots of the characteristic polynomial in $\lambda$:
$$\PP(\lambda) = \lambda^3 + c\lambda^2 + 2\alpha\beta d^2\lambda + 4d^2\beta \alpha^2$$

Using the same reasoning as before we have $c_2= c,\ c_1          = 2\alpha\beta d^2$ and $c_0= 4d^2\beta\alpha^2$.
A sufficient condition for the steady state to be asymptotically stable is
$2c\alpha\beta d^2-4d^2\beta\alpha^2>0$, i.e., $c-2\alpha>0$.
\end{proof}
\begin{remark}
We see that as long as the damping coefficient, $c$, is large enough, the sufficient conditions for stability of both the DAHA-NS and DAHA-S are fulfilled. 
Furthermore, the parameter space corresponding to the stability of DAHA-NS is larger than the one of the DAHA-S. 
\end{remark}

\subsection{Previous approaches}

\subsubsection{NAP-NS}

Let $\phi(X) = d-|X|$ and consider its linearization around $X^p\neq 0$, i.e., $\phi^p(X) = d-\frac{X^p}{|X^p|}X$, and the potential~\eqref{eq:W}.
The ODE system associated to the inner-loop of the NAP-NS in the case of two spheres in $\RR$ where one sphere is fixed at the origin can be written as
\begin{numcases}{}
 \dot X       &$= - \alpha \left(X -  \frac{X^p}{|X^p|}\lambda \right)$ \label{eq:NestedNONSmooth1Sphere1}\\
\dot \lambda  &$=
\begin{cases}
0, \text{ if } \lambda =0 \mbox{ and } d - \frac{X^p}{|X^p|}X < 0\\
\beta (d - \frac{X^p}{|X^p|}X), \mbox{ otherwise},
\end{cases}$\label{eq:NestedNONSmooth1Sphere2}
\end{numcases}
with the initial condition $(X,\lambda)(0) = (X^p,\lambda^p)$.

\begin{lemma} 
If $X^0~\neq~0$, then the steady state  of the system~\eqref{eq:NestedNONSmooth1Sphere1}-\eqref{eq:NestedNONSmooth1Sphere2}, $(X^*,\lambda^*)=d\left(\frac{|X^p|}{X^p},1\right)$, is asymptotically stable for any $\alpha$ and $\beta$ positive and the outer-loop converges in one iteration.
\end{lemma}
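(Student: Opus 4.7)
The plan is to exploit two facts simultaneously: first, that in one spatial dimension the non-smooth constraint $\phi(X)=d-|X|$ is already piecewise linear, so its linearization $\phi^p$ around any $X^p\neq 0$ agrees with $\phi$ on the entire half-line $\{X : \mathrm{sign}(X)=\mathrm{sign}(X^p)\}$; second, that the inner-loop ODE \eqref{eq:NestedNONSmooth1Sphere1}-\eqref{eq:NestedNONSmooth1Sphere2} is then genuinely linear (no approximation error), so ordinary Hurwitz-type analysis of the corresponding $2\times 2$ matrix will give a full, non-local, conclusion rather than a local one. This is why I expect the proof to be shorter than those of the DAHA-NS/DAHA-S lemmas: no appeal to Theorem~\ref{thm:linearStability} is strictly necessary.

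First I would set $s:=X^p/|X^p|\in\{-1,+1\}$ so the system becomes $\dot X=-\alpha(X-s\lambda)$ and (when the constraint is active) $\dot\lambda=\beta(d-sX)$. The candidate steady state is $(X^*,\lambda^*)=(sd,d)$, which coincides with the $d(|X^p|/X^p,1)$ stated. Performing the change of variables $Y=X-sd$, $\mu=\lambda-d$, and using $s^2=1$, the system reduces exactly (not approximately) to
\[
\begin{pmatrix}\dot Y\\ \dot\mu\end{pmatrix}
=
\begin{pmatrix}-\alpha & \alpha s\\ -\beta s & 0\end{pmatrix}
\begin{pmatrix}Y\\ \mu\end{pmatrix}.
\]
The characteristic polynomial is $\mathcal{P}(\lambda)=\lambda^2+\alpha\lambda+\alpha\beta s^2=\lambda^2+\alpha\lambda+\alpha\beta$, whose coefficients are all strictly positive, so both roots have negative real part. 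Because this is the exact system (not a linearization) on the relevant half-line, asymptotic stability of $(sd,d)$ is immediate for every $\alpha,\beta>0$. A brief remark would also note that a trajectory starting at $(X^p,\lambda^p)$ with $\lambda^p\ge 0$ cannot reach $X=0$: either it approaches $(sd,d)$ from within the correct half-plane, or one can verify directly that the line $X=0$ is repelling, using the explicit form of the linear flow.

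Finally, for the outer-loop convergence in one iteration, I would argue as follows: the inner-loop converges (by the step above) to $X^{p+1}=sd$, which has the same sign as $X^p$. Consequently $X^{p+1}/|X^{p+1}|=s$, and so $\phi^{p+1}(X)=d-sX=\phi^p(X)$. The subsequent linearization is therefore identical, so its own inner-loop has $X^{p+1}=sd$ as a fixed point and no further motion occurs; equivalently, $(sd,d)$ already satisfies the KKT system~\eqref{eq:systemSaddlePoint} for the original non-linearized problem, since $\phi(sd)=d-|sd|=0$ and the stationarity equation is satisfied with $\bar\lambda=d$. The main (very mild) obstacle is the bookkeeping of the sign $s$ and the verification that trajectories do not cross $X=0$; everything else is a direct computation on a constant-coefficient $2\times 2$ system.
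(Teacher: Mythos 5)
Your proposal is correct and follows essentially the same route as the paper: the paper's (terse) proof relies on exactly the $2\times 2$ constant-coefficient analysis you carry out — identical to the computation in the AHA-NS lemma, yielding the characteristic polynomial $\lambda^2+\alpha\lambda+\alpha\beta$ with positive coefficients — and then observes, as you do, that $X^{p+1}=d\,|X^p|/X^p$ preserves the sign of $X^p$ so that $X^p=X^1$ for all $p>1$. Your added remarks (that the inner-loop system is exactly linear on the relevant half-line, and the sign bookkeeping via $s$) are just a more explicit version of what the paper leaves implicit.
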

\begin{proof}
The stability analysis shows that the steady state of~\eqref{eq:NestedNONSmooth1Sphere1}-\eqref{eq:NestedNONSmooth1Sphere2}, namely, $(X^*,\lambda^*)=d\left(\frac{|X^p|}{X^p},1\right)$, is asymptotically stable. 
Furthermore, if $X_0 \neq 0$, the outer-loop is defined recursively by $X^{p+1} = d \frac{|X^p|}{X^p}$ and it converges in one iteration. In fact, $X^1 = d \frac{|X^0|}{X^0}$ and $X_p = X_1$ for all $p>1$.
\end{proof}
 
The conclusions of the analysis in the one dimensional case can not be directly extrapolated to higher dimensional cases, where the constraint functions are no more piecewise linear. 
In section~\ref{sec:numerics}, we resort to numerical simulations to get some insight about the behaviour of the system in two spatial dimensions.

\subsubsection{NAP-S}

Let $\phi(X) = d^2 - | X|^2$ and consider its linearization around $X^p$, i.e., $\phi^p(X) = d^2+|X^p|^2-2 X^p\cdot X$ and the potential~\eqref{eq:W}.
The ODE system associated to the inner-loop of the NAP-S in the case of two spheres in $\RR$ where one sphere is fixed at the origin can be written as
\begin{numcases}{}
 \dot X       &$= - \alpha \left(X -  2X^p\lambda \right)$ \label{eq:NestedSmooth1Sphere1}\\
\dot \lambda  &$=
\begin{cases}
0, \text{ if } \lambda =0 \mbox{ and } d^2 + (X^p)^2 - 2X^pX < 0\\
\beta (d^2 +(X^p)^2- 2 X^p X), \mbox{ otherwise},
\end{cases}$\label{eq:NestedSmooth1Sphere2}
\end{numcases}
with the initial condition $(X,\lambda)(0)= (X^p,\lambda^p)$.

\begin{lemma}
If $X^0~\neq~0$, then the steady state of the system~\eqref{eq:NestedSmooth1Sphere1}-\eqref{eq:NestedSmooth1Sphere2}, $(X^*,\lambda^*) = \frac{d^2+ (X^p)^2}{2(X^p)^2}(X^p , \frac{1}{2})$, is asymptotically stable for any $\alpha$ and $\beta$ positive and the outer-loop generates the sequence $\{X^p\}_{p\in \NN}$ defined iteratively by $X^{p+1} = \frac{d^2 + (X^p)^2}{2X^p}$, which is convergent.
\end{lemma}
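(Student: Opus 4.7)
The plan is to split the argument into two independent parts: (i) confirm the formula for the equilibrium and establish asymptotic stability of the inner-loop ODE system \eqref{eq:NestedSmooth1Sphere1}-\eqref{eq:NestedSmooth1Sphere2}; and (ii) analyze the outer-loop recursion $X^{p+1} = (d^2+(X^p)^2)/(2X^p)$ as a one-dimensional dynamical system.

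For part (i), I would set the right-hand sides of \eqref{eq:NestedSmooth1Sphere1}-\eqref{eq:NestedSmooth1Sphere2} to zero and observe that $\dot X = 0$ gives $X^* = 2X^p\lambda^*$, while the active-constraint condition $d^2 + (X^p)^2 - 2X^p X^* = 0$ gives $X^* = (d^2+(X^p)^2)/(2X^p)$. Combining the two yields $\lambda^* = (d^2+(X^p)^2)/(4(X^p)^2)$, matching the stated formula. Since the inner-loop ODE is affine in $(X,\lambda)$ on the active-constraint branch, its linearization about $(X^*,\lambda^*)$ coincides with the system itself shifted by the equilibrium, with Jacobian
\[
A = \begin{pmatrix} -\alpha & 2\alpha X^p \\ -2\beta X^p & 0 \end{pmatrix}
\]
and characteristic polynomial $\PP(\lambda) = \lambda^2 + \alpha\lambda + 4\alpha\beta (X^p)^2$. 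Assuming $X^p \neq 0$, both the trace $-\alpha$ and the determinant $4\alpha\beta(X^p)^2$ are strictly of the required sign, so the Routh-Hurwitz criterion in dimension two guarantees that both eigenvalues have negative real part. Asymptotic stability then follows from thm.~\ref{thm:linearStability}.

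For part (ii), the outer-loop map $f(x) = \tfrac{1}{2}(x + d^2/x)$ is precisely the classical Babylonian/Newton iteration for $\sqrt{d^2}$, and I would prove convergence by a short monotonicity argument. Since $f$ preserves sign, I may assume $X^0 > 0$ without loss of generality. The AM-GM inequality gives $X^{p+1} = \tfrac{1}{2}(X^p + d^2/X^p) \geq d$ for every $p \geq 0$, and once $X^p \geq d$,
\[
X^{p+1} - X^p = \frac{d^2 - (X^p)^2}{2X^p} \leq 0.
\]
Thus from $p=1$ onward the sequence is monotonically decreasing and bounded below by $d$, hence convergent; its limit must be a fixed point of $f$ in $[d,\infty)$, and the only such fixed point is $d$ itself. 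A symmetric argument shows that $X^p \to -d$ when $X^0 < 0$.

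The main technical point to check, beyond these two essentially routine arguments, is that the active-constraint branch $\lambda \geq 0$ is indeed the relevant one in a neighbourhood of $(X^*,\lambda^*)$ so that the linearization above is legitimate; this is immediate because $\lambda^* > 0$, so by continuity the complementary branch of \eqref{eq:NestedSmooth1Sphere2} is never visited near the equilibrium. I do not foresee any serious obstacle beyond this verification.
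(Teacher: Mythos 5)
Your proposal is correct and follows essentially the same route as the paper: establish asymptotic stability of the inner-loop equilibrium via the eigenvalues of the (affine) system's matrix, then prove convergence of the outer-loop recursion $X^{p+1}=(d^2+(X^p)^2)/(2X^p)$ by a monotonicity-and-boundedness argument identifying the limit with the fixed point $\pm d$. You actually supply more detail than the paper, which omits the Jacobian computation, and your AM--GM step replaces the paper's explicit $X^p=d+\epsilon$ calculation showing invariance of $(d,\infty)$; both are equivalent standard treatments of the Babylonian iteration.
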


\begin{proof}
The stability analysis shows that the steady state of~\eqref{eq:NestedSmooth1Sphere1}-\eqref{eq:NestedSmooth1Sphere2}, namely, $(X^*,\lambda^*) = \frac{d^2+ (X^p)^2}{2(X^p)^2}(X^p , \frac{1}{2})$, is asymptotically stable.
Consequently, the outer-loop generates the sequence defined recursively by
$X^{p+1} =  \frac{d^2 + (X^p)^2}{2X^p},$
which is well-defined for $X^0 \neq 0$. 
If this sequence is convergent to, say, $L$, then $L$ must satisfy $L =  \frac{d^2 + L^2}{2L}$ i.e., $ L = \pm d$. 
Now, if $X^p > d$ then 
$$\frac{X^{p+1}}{X^p} = \frac{1}{2} \left( \frac{d^2}{(X^p)^2} + 1 \right)< 1,$$
therefore $X^{p+1} < X^p$, i.e., the sequence decreases. 
Furthermore, if we write $X^p$ in the form $X^p = d + \epsilon $, $\epsilon >0$, we then have that
\begin{eqnarray}
X^{p+1} - d &=& \frac{1}{2}\left( \frac{d^2}{d+\epsilon}+d+\epsilon\right) - d \nonumber \\
        &=& \frac{1}{2(d+\epsilon)}(d^2 + (d+\epsilon)^2 - 2d(d+\epsilon)) \nonumber\\
        &=&  \frac{\epsilon^2}{2(d+\epsilon)} > 0\nonumber
\end{eqnarray}
i.e., $X^{p+1} > d$.
On the other hand, if $0< X^p < d$ then the sequence increases. 
Consequently, we finally conclude that if $X^0>0$ then the sequence $\{X^p\}_{p\in \NN}$ converges towards~$d$.
Using the same reasoning we conclude that if $X^0<0$ then $\{X^p\}_{p\in \NN}$ converges towards~$-d$.
We see that for the case of the smooth form of the constraint functions, the sequence generated by the outer-loop converges.
\end{proof}

\subsubsection{NAV-NS}

Let $\phi(X) = d - |X|$ and consider the linearization of $\phi$ around $X^p~\neq~0$ evaluated at $X^p + \tau V$, i.e., $\phi^p(X^p + \tau V) = d- |X^p|-\tau\frac{X^p}{|X^p|} V$ and the potential~\eqref{eq:W}.
In the particular case of two spheres in $\RR$ where one sphere is fixed at the origin, the ODE system corresponding to the inner-loop is given by
\begin{numcases}{}
\dot V &$=  -\alpha \left( V + X^p -   \tau\ \lambda \frac{X^p}{|X^p|}\right)$ \label{eq:NestedVelocities1Sphere1} \\
\dot \lambda  &$=
\begin{cases}
0, \text{ if } \lambda =0 \mbox{ and } d-|X^p| - \tau \frac{X^p}{|X^p|}V<0\\
\beta\left(d-|X^p| - \tau \frac{X^p}{|X^p|}V\right), \mbox{ otherwise},
 \end{cases} $\label{eq:NestedVelocities1Sphere2}
\end{numcases}
with the initial condition $(V,\lambda)(0) = (V^p, \lambda^p)$.

\begin{lemma}
If $X^0~\neq~0$, then the the steady state of the system~\eqref{eq:NestedVelocities1Sphere1}-\eqref{eq:NestedNONSmooth1Sphere2}
is asymptotically stable for any $\alpha$ and $\beta$ positive and the outer-loop converges in one iteration.
\end{lemma}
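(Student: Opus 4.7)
The plan is to mirror the NAP-NS argument, exploiting the fact that in one dimension the linearized constraint is actually exact (piecewise linear), so the inner-loop ODE becomes a genuine linear system as soon as we work in the active-constraint regime $\lambda>0$. First I would fix the sign $s:=X^p/|X^p|\in\{-1,+1\}$ so that $\phi^p(X^p+\tau V) = d-|X^p|-\tau s V$, and look for the steady state of the inner loop by setting $\dot V=0$ and $\dot\lambda=0$ in \eqref{eq:NestedVelocities1Sphere1}--\eqref{eq:NestedVelocities1Sphere2}. The two resulting linear equations $V^*+X^p=\tau s\lambda^*$ and $d-|X^p|=\tau s V^*$ are solved directly (using $s^2=1$) to give $V^*=s(d-|X^p|)/\tau$ and $\lambda^*=|X^p|/\tau+(d-|X^p|)/\tau^2>0$, which is consistent with the assumption that the constraint is active.

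Next I would establish the asymptotic stability of $(V^*,\lambda^*)$. Because the right-hand side in the active regime is affine, the Jacobian at the equilibrium is simply the coefficient matrix
\[
A=\begin{bmatrix}-\alpha & \alpha\tau s\\ -\beta\tau s & 0\end{bmatrix}.
\]
Its characteristic polynomial is $\mu^2+\alpha\mu+\alpha\beta\tau^2$ (the $s$'s cancel since $s^2=1$), and both roots have real part $-\alpha/2<0$ for every $\alpha,\beta>0$. By Theorem~\ref{thm:linearStability} this yields asymptotic stability of the equilibrium, and also tells us that the inner loop produces $(V^{p+1},\lambda^{p+1})=(V^*,\lambda^*)$ in the limit.

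The last step handles the outer loop. Substituting $V^{p+1}=V^*$ into the update $X^{p+1}=X^p+\tau V^{p+1}$ gives $X^{p+1}=X^p+s(d-|X^p|)=sd$, so $|X^{p+1}|=d$ with the sign of $X^p$ preserved. To see that this single step actually lands on the fixed point, I would then rerun the inner-loop analysis with $X^p$ replaced by $X^{p+1}=sd$: the formula for $V^*$ gives $V^*=s(d-d)/\tau=0$, hence $X^{p+2}=X^{p+1}$ and all further outer iterates coincide with $X^{p+1}$. This establishes convergence of the outer loop in one iteration.

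I do not anticipate a genuine obstacle: the only place that needs care is bookkeeping of the sign factor $s=X^p/|X^p|$, since the non-smooth form introduces it in both equations and one must use $s^2=1$ correctly to collapse the Jacobian and the closed form of $V^*$. Aside from that, the argument is a direct analogue of the NAP-NS lemma, with the velocity variable playing the role previously played by the position.
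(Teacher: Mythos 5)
Your proposal is correct and follows essentially the same route as the paper: solve the affine inner-loop system for its steady state, check that the constant Jacobian has eigenvalues in the left half-plane, and observe that $X^{p+1}=X^p+\tau V^*=d\,X^p/|X^p|$ so the outer loop is stationary after one step; your formulas for $V^*$, $\lambda^*$ and the characteristic polynomial all agree with the paper's. Two harmless imprecisions worth noting: $\lambda^*=|X^p|/\tau+(d-|X^p|)/\tau^2$ is not unconditionally positive (it is negative when $\tau<1$ and $|X^p|>d/(1-\tau)$, a feasibility subtlety the paper also ignores), and the roots of $\mu^2+\alpha\mu+\alpha\beta\tau^2$ have real part $-\alpha/2$ only in the complex-conjugate case — when the discriminant is positive they are two distinct negative reals — but in every case both roots lie strictly in the left half-plane, so the stability conclusion stands.
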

\begin{proof}
The stability analysis shows that the steady state,
$$V^{*} = \frac{d-|X^p|}{\tau} \frac{|X^p|}{X^p},\ \lambda^* = \frac{1}{\tau^2}(d-|X^p|(1-\tau) ),$$
is asymptotically stable. 
Consequently, the outer-loop generates the sequence defined recursively by $X^{p+1} = d \frac{|X^p|}{X^p}$. %= X^p + \tau\ V^{p+1}  = X^p + \tau \frac{d-|X^p|}{\tau} \frac{|X^p|}{X^p}. 
If $X_0 \neq 0$ the sequence is well-defined and $X^1 = d \frac{|X^0|}{X^0}$ and $X_p = X_1$ for all $p>1$, hence the outer-loop converges in one iteration.
\end{proof}

The NAV with the smooth form of the constraint functions did not show numerically good convergence results, for we do not explore it in this paper. 
We note that in~\cite{Maury2006}, the author has also only considered the non-smooth form of the constraint functions.

\section{Numerical results}\label{sec:numerics}
\setcounter{equation}{0}

In this section we investigate and compare the numerical results obtained from the damped Arrow-Hurwicz algorithms (DAHA-NS, DAHA-S) and the nested algorithms (NAP-NS, NAP-S and NAV-NS) for the potential defined in~\eqref{eq:W} in two spatial dimensions, $b=2$.
In particular, we address the convergence time, the robustness of the convergence time with respect to the initial configurations and the accuracy of each method for the case of $N=7$. 
We briefly study the case $N=100$ and show an example of intermediate steps obtained with the DAHA-S for $N=2000$.
Note that for most values of $N$, the local minimum of $W$ that is found by each algorithm, $W(\bXX)$, depends on the initial configuration, except in very specific cases, such as $N=7$, where the minimum is unique, precisely $W(\bXX)=3$.
This fact motivates our choice of $N=7$ for studying the accuracy.

In order to adjust the spatial dimensions, the numerical parameters must satisfy
$\alpha,\beta, c \sim \OO(1)$ for the methods with the non-smooth form of the constraint functions and $\alpha, c \sim \OO(1)$ and $\beta \sim \OO(1/d^2)$ for the methods with the smooth form of the constraint functions. In the following we have considered $d=1$.

In order to be able to compare the nested algorithms with the DAHA regarding convergence time, we only consider the evolution of $\XX$ and $\VV$ and we do not consider the evolution of $\llambda$.
We denote by $\|\cdot \|$ the Euclidean norm in $\RR^{bN}$.
For a given small and positive $\epsilon$, the stopping criterion for the minimization algorithms associated to the NAP, is given by the following condition on the relative error
\begin{eqnarray}
\frac{\| \XX^{n+1} - \XX ^n\|}{\| \XX^n\|} &< \epsilon_{inner}.
\label{eq:convTest1}
\end{eqnarray}
For the case of the minimization problem formulated in terms of the velocities, the stopping criterion is similar but instead of $\XX$ we write $\VV$ and instead of normalizing by $\VV^n$, we normalize by $\XX^n$, yielding 
\begin{eqnarray}
 \frac{\| \VV^{n+1} - \VV ^n\|}{\|\XX^n\|} &< \frac{\epsilon_{inner}}{\tau}.
\label{eq:convTest2}
\end{eqnarray}
By using the Euler step 
$\XX^{n+1} = \XX^p + \tau\ \VV^{n+1}$
we show that the two conditions~\eqref{eq:convTest1} and~\eqref{eq:convTest2} are equivalent.
As we will see, in order to get a fast convergence with the nested algorithms, one does not need to wait for the convergence of the inner-loop. 
We introduce a new parameter, $I_{inner}$, which stands for the maximum number of iterations of the inner-loop allowed per outer-loop iteration.
Finally, the stopping criterion for both the outer-loop of the NAP and the NAV, as well as, for the DAHA reads
\begin{eqnarray}
 \frac{\| \XX^{p+1} - \XX^p\|}{\| \XX^p\|} < \epsilon.
\label{eq:conTest3}
\end{eqnarray}

The assessment and comparison of the methods
will be made through the comparison of statistical indicators obtained from averaging certain quantities over a set of different initial configurations. 
These indicators are introduced bellow.

\begin{definition}
Consider a set of $p$ initial configurations for which an algorithm converges, i.e., the stopping criterion is satisfied in a finite number of iterations.
Let $T_{\ell}$ be the number of iterations needed for the algorithm to converge when starting with the $\ell th$ initial configuration. Let $A_{ij}$ be the overlapping area of spheres $i$ and $j$ at convergence and $A_{total}=N\pi (d/2)^2$. 

We define the following statistical indicators
{\bf mean convergence time},
{\bf variance of the convergence time}
 and the 
{\bf mean proportion of overlapping area per sphere} as 

$$T=\frac{1}{p}\sum\limits_{\ell=1}^p\limits T_{\ell},
\ \ \ \sigma^2 = \frac{1}{p-1}\sum\limits_{\ell=1}^p\limits (T_{\ell} - T)^2 \ \ \text{and}\ \ \  A = \frac{1}{pN A_{total}}\sum\limits_{i,j \in \{1,...,N\},\ i<j} A_{ij},$$
respectively.
\end{definition}

The indicator $T$ measures the efficiency of an algorithm with respect to the convergence time, $A$ and $W$ measure the accuracy of the final configuration and $\sigma^2$ measures the robustness of the convergence time with respect to the initial configurations.
For simplicity we assume that the time interval between iterations is constant and invariant among the different algorithms. 
As a consequence of this simplification, we will use the number of iterations as the time unit of $T$.

\subsection{Case $N=7$}

We present a detailed numerical study for the case of $N=7$ spheres in dimension $b=2$.
The $20$ different initial configurations considered in this section were generated from a standard Gaussian distribution.
%The results described in this section were obtained by considering $20$ randomly generated initial configurations from a standard Gaussian distribution. 
We choose the tolerances $\epsilon=10^{-6}$ and $\epsilon_{inner} = 10^{-9}$ and the maximum number of iterations of the inner-loop $I_{inner} = 10$.
\begin{figure}
\begin{subfigure}{.5\textwidth}
  \centering
  \includegraphics[width=1\linewidth]{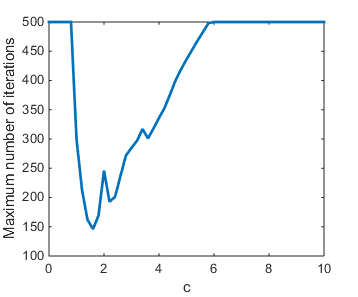}
  \caption{DAHA-NS}
  \label{fig:DAHANS_N=7_diffc}
\end{subfigure}
\begin{subfigure}{.5\textwidth}
  \centering
  \includegraphics[width=1\linewidth]{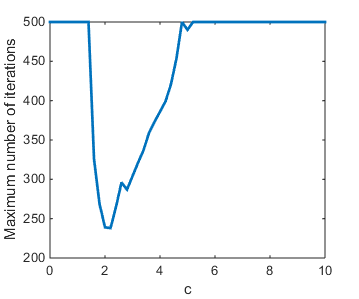}
  \caption{DAHA-S}
  \label{fig:DAHAS_N=7_diffc}
\end{subfigure} 
\caption{Maximum number of iterations needed for the DAHA to converge over a set of $20$ randomly generated initial configurations as a function of $c$ for $N=7$ and $\epsilon=10^{-6}$.
The numerical parameters used are:  
~\ref{fig:DAHANS_N=7_diffc} $(\alpha,\beta)=(0.3,3)$ and
~\ref{fig:DAHAS_N=7_diffc} $(\alpha,\beta)=(0.3,1.4)$.
}
\label{fig:DAHA_N=7_diffc}
\end{figure} 
\begin{figure}
\begin{subfigure}{.5\textwidth}
  \centering
  \includegraphics[width=1.05\linewidth]{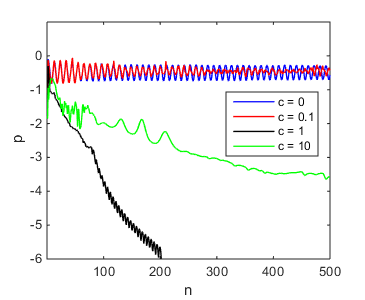}
  \caption{{DAHA-NS}}
  \label{fig:DAHANS_N=7_diffc_examples}
\end{subfigure}
\begin{subfigure}{.5\textwidth}
  \centering
  \includegraphics[width=1.05\linewidth]{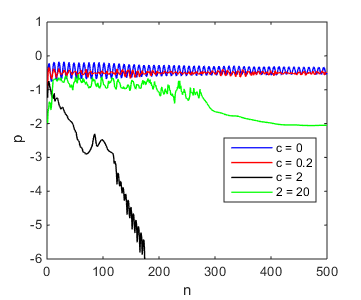}
  \caption{DAHA-S}
  \label{fig:DAHAS_N=7_diffc_examples}
\end{subfigure} 
\caption{Relative error on log scale averaged over a set of $20$ randomly generated initial configurations, $p$, as a function of iteration number, $n$, for different values of $c$ and for $N=7$ and $\epsilon=10^{-6}$.
The numerical parameters used are:  
~\ref{fig:DAHANS_N=7_diffc_examples} $(\alpha,\beta)=(0.3,3)$ and
~\ref{fig:DAHAS_N=7_diffc_examples} $(\alpha,\beta)=(0.35,1.4)$.
}
\label{fig:DAHA_N=7_diffc_examples}
\end{figure} 
In order to study the relation between the damping parameter $c$ and the convergence time of the DAHA with smooth and non-smooth constraints, we plot
in Figure~\ref{fig:DAHA_N=7_diffc} the maximum number of iterations over $20$ different randomly generated initial configurations as a function of $c \in (0, 10]$.
We observe that the lower convergence time is attained when $c \approx 2$, for both the DAHA with the smooth and with the non-smooth constraints.
In Figure~\ref{fig:DAHA_N=7_diffc_examples} we plot the relative error as a function of iteration number, $n$, for different values of $c$. 
If $c=0$ we observe that the relative error oscillates and never drops bellow $10^{-1}$.  
As we increase $c$ the oscillations tend to diminish. 
In the following we have used $c=2$.
Note that this choice for $c$ eliminates the dependence on $\XX^{n-1}$ in~\eqref{eq:secc_discrete1}-\eqref{eq:secc_discrete2}, in this case, the DAHA can be seen as a discretization of the following first-order ODE system:
\begin{numcases}{}
 \dot X_i &$= - \frac{1}{2}\alpha^2 [\nabla_{X_i}  W (\XX) + \sum\limits_{k,\ell \in \{1,...,N\},\ k<\ell} \lambda_{k\ell} \nabla_{X_i} \phi_{k\ell}(\XX)]$ \nonumber\\ 
& $ -\frac{1}{2} \alpha \beta \sum\limits_{k,\ell \in \{1,...,N\},\ k<\ell} \phi_{k\ell}(\XX)  \lambda_{k\ell} \nabla_{X_i} \phi_{k\ell}(\XX), \ i = 1,...,N$ \nonumber\\
\dot \lambda_{k\ell}  &$=\ 
\begin{cases}
0,\ \text{ if } \lambda_{k\ell}=0 \text{ and } \phi_{k\ell}(\XX)<0\\
\beta \phi_{k\ell}(\XX),\ \text{ otherwise }
\end{cases}$,\nonumber \\
& \hspace{4cm}$ k,\ell = 1,...,N,\   k< \ell$.  \nonumber
\end{numcases}
\begin{figure}
\begin{subfigure}{.5\textwidth}
  \centering
  \includegraphics[width=0.75\linewidth]{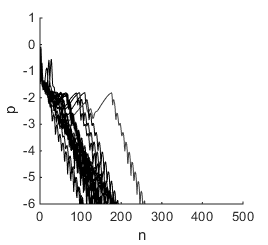}
  \caption{{DAHA-NS}}
  \label{fig:itBest_N=7_1}
\end{subfigure}
\begin{subfigure}{.5\textwidth}
  \centering
  \includegraphics[width=0.75\linewidth]{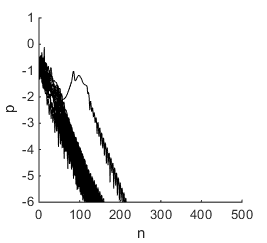}
  \caption{DAHA-S}
  \label{fig:itBest_N=7_2}
\end{subfigure} \\
\begin{subfigure}{.5\textwidth}
  \centering
  \includegraphics[width=0.75\linewidth]{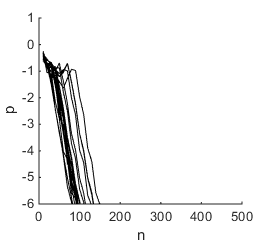}
  \caption{NAP-NS}
    \label{fig:itBest_N=7_3}
\end{subfigure}%
\begin{subfigure}{.5\textwidth}
  \centering
  \includegraphics[width=0.75\linewidth]{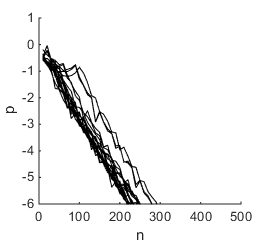}
  \caption{NAP-S}
    \label{fig:itBest_N=7_4}
\end{subfigure} \\
\begin{subfigure}{.5\textwidth}
  \centering
  \includegraphics[width=0.75\linewidth]{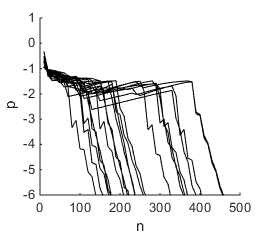}
  \caption{NAV-NS}
    \label{fig:itBest_N=7_5}
\end{subfigure}%
\caption{Relative error on log scale, $p$, as a function of iteration number, $n$, for $20$ randomly generated initial configurations computed by each algorithm for $N=7$ and $\epsilon=10^{-6}$. 
The numerical parameters used are:  
~\ref{fig:itBest_N=7_1} $(\alpha,\beta,c)=(0.3,3,2)$,
~\ref{fig:itBest_N=7_2} $(\alpha,\beta,c)=(0.35,1.4,2)$,
~\ref{fig:itBest_N=7_3} $(\alpha,\beta,I_{inner}, \epsilon_{inner})=(0.6,0.46,10,10^{-9})$,
~\ref{fig:itBest_N=7_4} $(\alpha,\beta,I_{inner}, \epsilon_{inner})=(0.25,0.28,10,10^{-9})$  and
~\ref{fig:itBest_N=7_5} $(\alpha,\beta,I_{inner}, \epsilon_{inner},\tau)=(0.48,126,10,10^{-9},0.1)$ .}
\label{fig:itBEST_N=7} 
\end{figure}
We now consider the five methods, namely, the DAHA-NS, DAHA-S, NAP-NS, NAP-S and NAV-NS. 
The numerical simulations suggest that $\alpha$ has influence in the attraction and $\beta$ in the repulsion between spheres, which can also be observed by looking directly at the equations. 
If $\beta$ is too large we observe oscillations, if $\alpha$ or both parameters are too large we observe numerical instability and if $\alpha$ is small we observe a very slow dynamics. 
The optimal set of parameters should then be chosen near the parameters that lead to numerical instability, which may be a problem, since an algorithm may be very efficient for some set of initial configurations, however it may be numerical unstable for another. 
Moreover, we have observed that the final positions obtained from each method by varying the numerical parameters are naturally not always the same. 
Only the relative positions (apart from permutations) are invariant. As an exception, the NAV leads to nearly exactly the same configurations. 
In fact, we can see from equations~\eqref{eq:vel1}-\eqref{eq:vel2} that the dynamics of the particles in the NAV is not directly affected by a change of the numerical parameters, since the parameters are only involved in the computation of the velocity. 
Contrarily, in the NAP and DAHA a change in the parameters produces a different dynamics, which leads to different final configurations. 
This is comprehensible, since the parameters in those algorithms are directly related to the attraction and repulsion forces between the spheres. 
The previous observation could be statistically verified by comparing all the final configurations produced by each method and checking how different (how far away from each other) they are. 

In the following, we have chosen the numerical parameters $(\alpha, \beta)$ that correspond to a fast convergence of each method for all the $20$ sets of initial configurations.
We plot in Figure~\ref{fig:itBEST_N=7} the relative error on log scale as a function of iteration number for $20$ randomly generated initial configurations and for each method.
We observe that the profile of the relative error follows the pattern: non-monotone behaviour, followed by an approximately linear decay at a certain speed, which seems to be invariant with respect to the initial configuration. 
The faster decay is observed in the NAP-NS (see Figure~\ref{fig:itBest_N=7_3}).  
The efficiency of the NAV-NS (see Figure~\ref{fig:itBest_N=7_5}) is apparently highly dependent on the initial configuration.
\begin{table}
\centering
\begin{tabular}{c|c|c|c}
\hline 
  $(\epsilon, \epsilon_{inner})$ & $(10^{-4},10^{-7})$& $(10^{-6},10^{-9})$  & $(10^{-8},10^{-11})$ \\ \hline
\multicolumn{4}{ c }{Mean convergence time ($T$)} \\ \hline
DAHA-NS                         &   $103$  &  $145$  & $198$\\ 
DAHA-S                          &  $85$  &   $133$  &   $182$  \\       
{\bf NAP-NS}                          &  ${\bf 81}$  &  ${\bf 107}$  &  ${\bf 131}$\\ 
NAP-S                          &  $165$  &  $246$  &  $325$ \\ 
NAV-NS                          &    $248$     & $283$      &  $535$    \\ \hline
\multicolumn{4}{ c }{Order of accuracy $(A, W-3)$} \\ \hline
DAHA-NS                     & $(10^{-6},-10^{-6})$ & $(10^{-9},-10^{-9})$ & $(10^{-12},-10^{-13})$\\ 
DAHA-S                      & $(10^{-7}, -10^{-6})$ &$(10^{-10},-10^{-10})$& $(10^{-12},-10^{-13})$ \\
{\bf NAP-NS} 						& ${\bf(10^{-9},10^{-8})}$ &  ${\bf(10^{-12},10^{-12})}$ &  ${\bf(10^{-15},0)}$ \\
NAP-S 						& $(10^{-8}, -10^{-5})$ & $( 10^{-11}, 10^{-6})$ & $(10^{-13},-10^{-8})$ \\
NAV-NS 						& $(10^{-8},10^{-1})$ & $( 10^{-10},-10^{-1})$	 & $(10^{-14},10^{-9})$ \\ \hline
\multicolumn{4}{ c }{Variance of the convergence time ($\sigma^2$)} \\ \hline
DAHA-NS                         & $1.08 \times 10^3 $ & $1.21 \times 10^3 $  & $1.70 \times 10^3 $ \\
DAHA-S							& $4.19 \times 10^2 $ & $4.49 \times 10^2$ & $ 5.65 \times 10^2 $ \\
{\bf NAP-NS}  						& ${\bf 3.00 \times 10^2}$ & ${\bf 4.13 \times 10^2} $  & ${\bf 2.93 \times 10^2 }$ \\
NAP-S 							& $5.74 \times 10^2 $ & $4.68 \times 10^2 $ & $ 4.68 \times 10^2 $ \\
NAV-NS 							& $ 1.03 \times 10^4$ & $ 1.04 \times 10^4$ & $1.80 \times 10^5 $ \\ \hline
\end{tabular}
\caption{Results of the assessment of the final configurations averaged over a set of $20$ initial configurations and obtained by each algorithm for $N=7$ and for three different values of $\epsilon$, namely, $10^{-4},\ 10^{-6}$ and $10^{-8}$. The parameters used are 
DAHA-NS $(\alpha,\beta,c)=(0.3,3,2)$,
DAHA-S $(\alpha,\beta,c)=(0.35,1.4,2)$,
NAP-NS $(\alpha,\beta,I_{inner}, \epsilon_{inner})=(0.6,0.46,10,10^{-9})$,
NAP-S $(\alpha,\beta,I_{inner}, \epsilon_{inner})=(0.25,0.28,10,10^{-9})$  and
NAV-NS $(\alpha,\beta,I_{inner}, \epsilon_{inner},\tau)=(0.48,126,10,10^{-9},0.1)$ .}
\label{tab:assessmentSol_N=7}
\end{table}
In order to quantify and compare the efficiency, as well as, the accuracy of the final configurations generated by each algorithm we consider three different tolerances, $\epsilon = 10^{-4}$, $\epsilon = 10^{-6}$ and $\epsilon = 10^{-8}$, and we compute the mean convergence time, $T$, the variance of the convergence time, $\sigma^2$, the mean proportion of overlapping area per sphere $A$ and the difference between the theoretical optimum and the value of $W$ at convergence. 
The results are presented in Table~\ref{tab:assessmentSol_N=7} and they are averaged over a set of $20$ randomly generated initial configurations. 
We observe that the NAP-NS (in bold) performs better than any other method, while the NAV-NS is the least robust to initial configurations, the slowest to reach convergence and it only produces an accurate solution for $\epsilon = 10^{-8}$.
In the case of the smooth constraints, we observe that the DAHA-S seems to converge faster and produce more accurate solutions than the NAP-S.

\subsection{Case $N=100$}

We present in the following a short study for the case of $N=100$ spheres. 
The $5$ different initial configurations considered in this section were generated from a standard Gaussian distribution. 
In this case where the initial configurations are very dense, one may observe two different types of behaviours depending on the choice of the numerical parameters: either the spheres disperse initially very rapidly before they start to concentrate again  while trying to avoid overlapping with other spheres or they disperse slowly while trying to rearrange in a non-overlapping configuration.
We keep the choice $\epsilon=10^{-6}$ and $\epsilon_{inner}=10^{-9}$, and we choose $I_{inner} = 10^3$.
Similarly to the case $N=7$, the best value for the damping parameter $c$ should be of order $\OO(1)$. We keep the choice $c=2$.

\begin{figure}
\begin{subfigure}{.5\textwidth}
  \centering
  \includegraphics[width=0.75\linewidth]{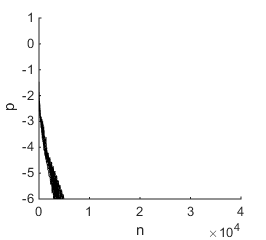}
  \caption{{DAHA-NS}}
  \label{fig:itBest_N=100_1}
\end{subfigure}
\begin{subfigure}{.5\textwidth}
  \centering
  \includegraphics[width=0.75\linewidth]{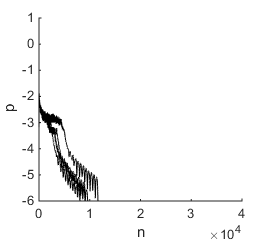}
  \caption{DAHA-S}
  \label{fig:itBest_N=100_2}
\end{subfigure} \\
\begin{subfigure}{.5\textwidth}
  \centering
  \includegraphics[width=0.75\linewidth]{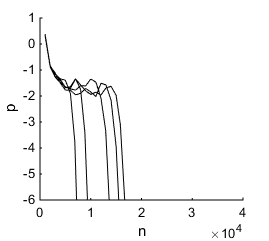}
  \caption{NAP-NS}
    \label{fig:itBest_N=100_3}
\end{subfigure}%
\begin{subfigure}{.5\textwidth}
  \centering
  \includegraphics[width=0.75\linewidth]{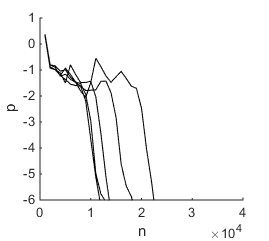}
  \caption{NAP-S}
    \label{fig:itBest_N=100_4}
\end{subfigure} \\
\begin{subfigure}{.5\textwidth}
  \centering
  \includegraphics[width=0.75\linewidth]{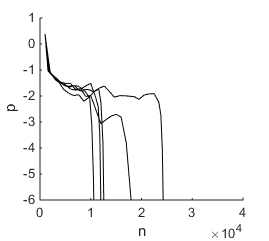}
  \caption{NAV-NS}
    \label{fig:itBest_N=100_5}
\end{subfigure}%
\caption{Relative error on log scale, $p$, as a function of iteration number, $n$, for $5$ randomly generated initial configurations computed by each algorithm for $N=100$ and $\epsilon=10^{-6}$. 
The numerical parameters used are:  
~\ref{fig:itBest_N=100_1} $(\alpha,\beta,c)=(0.07,0.5,2)$,
~\ref{fig:itBest_N=100_2} $(\alpha,\beta,c)=(0.04,0.15,2)$,
~\ref{fig:itBest_N=100_3} $(\alpha,\beta,I_{inner}, \epsilon_{inner})=(0.1,0.16,10^3,10^{-9})$,
~\ref{fig:itBest_N=100_4} $(\alpha,\beta,I_{inner}, \epsilon_{inner})=(0.015,0.026,10^3,10^{-9})$  and
~\ref{fig:itBest_N=100_5} $(\alpha,\beta,I_{inner}, \epsilon_{inner},\tau)=(0.31,41,10^3,10^{-9},0.1)$ .}
\label{fig:itN=100_ini=5} 
\end{figure}

We now consider the five methods, namely, the DAHA-NS, DAHA-S, NAP-NS, NAP-S and NAV-NS. 
In the following we have chosen the numerical parameters $(\alpha, \beta)$ that correspond to a fast convergence of each method for all the $5$ sets of initial configurations.
We plot in Figure~\ref{fig:itN=100_ini=5} the relative error on log scale as a function of iteration number.  Contrarily to the case $N=7$ the DAHA seems to converge faster than the NAP (see Figures~\ref{fig:itBest_N=100_1}-\ref{fig:itBest_N=100_4}). 
The efficiency of the nested algorithms (see Figures~\ref{fig:itBest_N=100_3}-\ref{fig:itBest_N=100_5}) seems to be highly dependent on the initial configuration.

Note that the performance of the methods depends not only on the numerical parameters, but also on the initial configuration. 
In this study, we have only considered initial configurations that are very concentrated around one point.
In the other case, i.e., if  the spheres are initially far away from each other, then all the simulations must be redone and different conclusions may be drawn.

\subsection{Case $N=2000$}

Numerical simulations were successfully performed with the DAHA for $N$ large, up to $N=2000$. 
In order to obtain a faster convergence towards a local minimizer~\cite{PreprintSebastien}, a fourth parameter, $\gamma$, was introduced in the first equation of the original formulation of the DAHA~\eqref{eq:secc_discrete1}-\eqref{eq:secc_discrete2}, yielding,
\begin{numcases}{}
 &$ X_i^{n+1} = 
 \frac{1}{1+c/2} \left( 2X_i^n  - (1-c/2) X_i^{n-1}\right)$\nonumber\\
& \hspace{1cm}$-  \frac{\alpha^2}{1+c/2} [\nabla_{X_i}  W (\XX^n) + \sum\limits_{k,\ell \in \{1,...,N\},\ k<\ell} \lambda_{k\ell}^n \nabla_{X_i} \phi_{k\ell}(\XX^n)]$ \nonumber\\ 
& \hspace{1cm} $- \frac{\gamma^2}{1+c/2} \sum\limits_{k,\ell \in \{1,...,N\},\ k<\ell} \phi_{k\ell}(\XX^n)  \lambda_{k\ell}^n \nabla_{X_i} \phi_{k\ell}(\XX^n), \ i = 1,...,N $\nonumber\\
 &$\lambda_{k\ell}^{n+1} = \max\{0,\lambda_{k\ell}^n + \beta\phi_{k\ell}(\XX^{n+1})\},\ k,\ell = 1,..., N,\ k< \ell. $ \nonumber
\end{numcases}

In Figure~\ref{fig:Ex_N=2000} we present an example of four configurations that were obtained at intermediate steps, namely, $n=1, n=101, n=1001$ and  $n=10001$ for $N=2000$ with the DAHA-S.
The initial configuration was generated from a standard Gaussian distribution.

\begin{figure}
\begin{subfigure}{0.5\textwidth}
 \centering
  \includegraphics[width=1\linewidth]{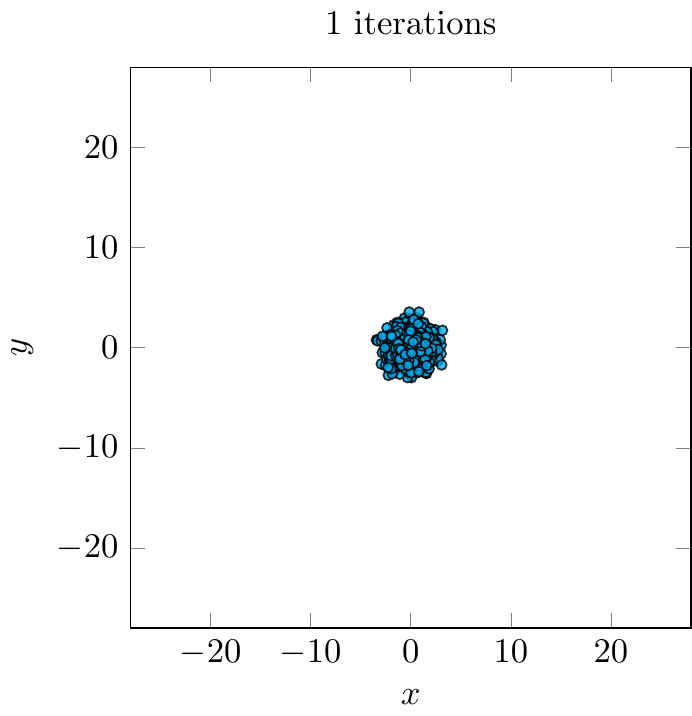}
\end{subfigure} 
\begin{subfigure}{0.5\textwidth}
 \centering
  \includegraphics[width=1\linewidth]{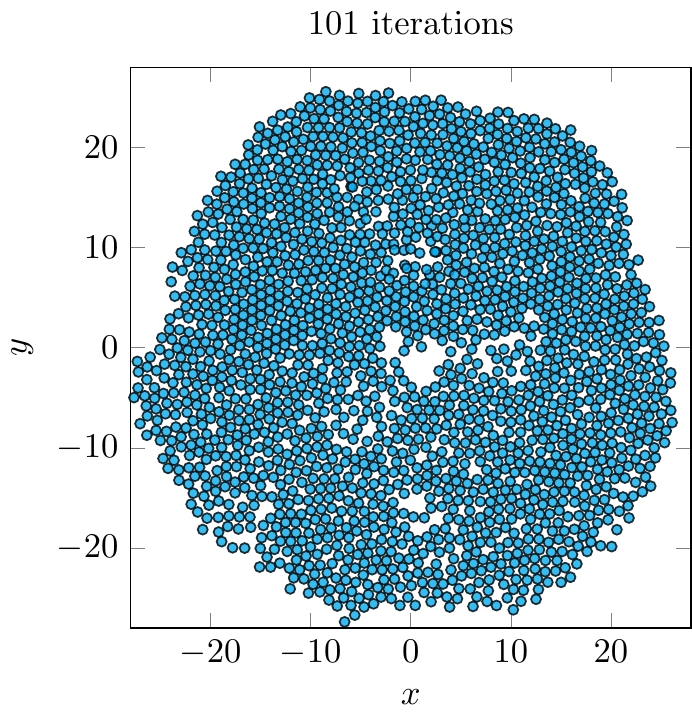}
\end{subfigure} 
\begin{subfigure}{0.5\textwidth}
 \centering
  \includegraphics[width=1\linewidth]{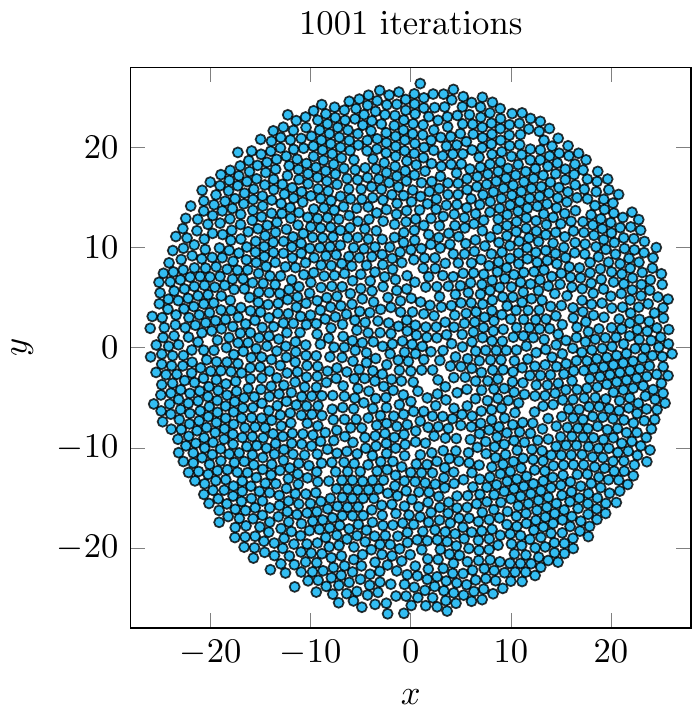}
\end{subfigure}
\begin{subfigure}{0.5\textwidth}
 \centering
  \includegraphics[width=1\linewidth]{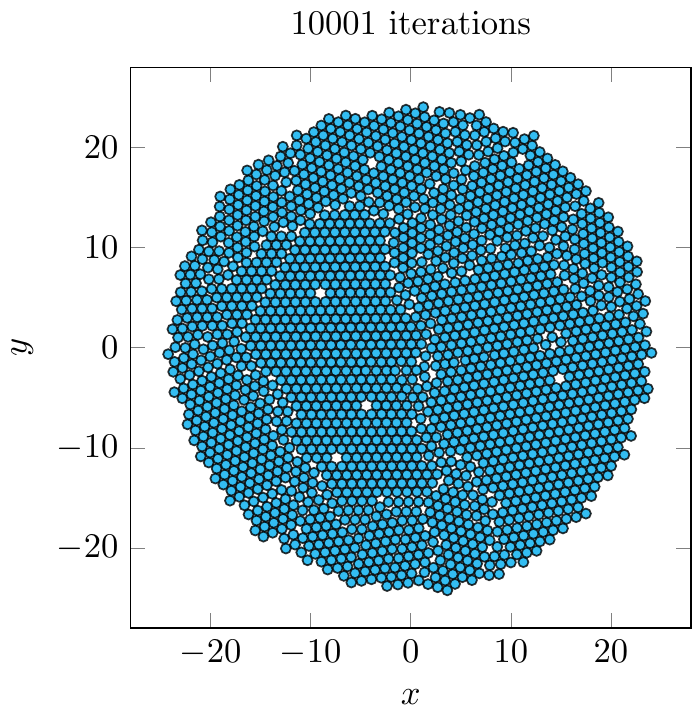}
\end{subfigure}
\caption{Sequence of configurations produced at intermediate steps, namely, $n=1$, $n=101$, $n=1001$ and $n=10001$ with the DAHA-S and for $N=2000$. The numerical parameters used are: $(\alpha, \beta,\gamma, c) = (0.0035\times\sqrt{5}, 1.4,0.7, 2)$.}
\label{fig:Ex_N=2000}
\end{figure}

\subsection{Summary of the results}\label{sec:summary}

\begin{table}[h!]
\centering
\begin{tabular}{l|ll|ll}
& \multicolumn{2}{ c| }{Smooth constraints} & \multicolumn{2}{ c }{Non-smooth constraints} \\ \hline
                              & Analysis & Simulations & Analysis & Simulations\\ \hline
  AHA              & \xmark   &    \xmark   & \cmark   & \xmark   \\
 DAHA       & \cmark   &    \cmark   & \cmark   & \cmark   \\ 
 NAP    & \cmark   &    \cmark   & \cmark   & \cmark   \\ 
  NAV   & \cmark   &   \xmark    & \cmark   & \cmark         \\
%  CAV & \cmark   &  \xmark     & \cmark   & \xmark     \\ 
  \hline
\end{tabular}
\caption{Summary of the results obtained from the analysis for $N=2$ in one spatial dimension and numerical simulations for $N\geq 2$ in two spatial dimensions.}\label{tab:comparison}
\end{table}

We confront in table~\ref{tab:comparison} the results obtained from the theoretical analysis for the case of two spheres ($N = 2$) in one dimension ($b = 1$) and the results of the numerical simulations for the case of $N>2$ spheres in two dimensions ($b=2$).
If the system converges to a non-overlapping configuration within a reasonable number of iterations and for some set of parameters we write \cmark, otherwise we write \xmark.

\section{Conclusions and future work}\label{sec:future}
 
We have deduced a promising algorithm for solving a non-convex minimization problem, which was derived from a multi-step variant of the Arrow-Hurwicz algorithm: the damped Arrow-Hurwicz algorithm.
This algorithm can be seen as a generalization of the AHA, when an additional parameter, $\gamma$, is considered.
In the particular case of packing problems, the DAHA has revealed to perform better for a large number of spheres when compared to other classical algorithms.
However, further studies should be done in order to explore both the advantages and limitations of this method.
In particular, a detailed analysis on the stability of a steady state of the corresponding ODE system for the general case of $N$ spheres in $\RR^b$ is still missing, as well as, the analysis of the numerical stability.
In the present work, the DAHA was assessed in the case of a global potential and  highly dense initial configurations, which we believe to correspond to the worst scenario possible. 
Nevertheless, the results obtained here do not necessarily apply to other types of potentials or initial configurations, and hence similar studies should be conducted for those cases.
In the next work we should consider more general particle systems with different sized spheres or ellipsoids.
The applications of hard-particle systems are vast, for we believe these type of algorithms are going to be very useful in the study of many biological, physical and social systems.

\section*{Acknowledgment}

This work is supported by the National Science Foundation (NSF) under grants DMS-1515592 and RNMS11-07444 (KI-Net), the Engineering and Physical Sciences Research Council (EPSRC) under grant ref. EP/M006883/1 and by the Department of Mathematics, Imperial College, through a Roth PhD studentship. P. D. is on leave from CNRS, Institut de Math\'{e}matiques, Toulouse, France. He acknowledges support from the Royal Society and the Wolfson foundation through a Royal Society Wolfson Research Merit Award.

\newpage
\bibliographystyle{plain}
\bibliography{BiblioPaperNonoverlapping2016}

\begin{thebibliography}{10}

\bibitem{aarts1988annealing}
Emile Aarts and Jan Korst.
\newblock {\em Simulated annealing and {B}oltzmann machines}.
\newblock John Wiley and Sons Inc, New York, 1988.

\bibitem{Addis2008}
Bernardetta Addis, Marco Locatelli, and Fabio Schoen.
\newblock Disk packing in a square: a new global optimization approach.
\newblock {\em INFORMS Journal on Computing}, 20(4):516--524, 2008.

\bibitem{Uzawa58}
K.~J. Arrow, L.~Hurwicz, and H.~Uzawa.
\newblock {\em Studies in linear and nonlinear programming}.
\newblock Stanford University Press, 1958.

\bibitem{Bertsekas1996}
D.~Bertsekas.
\newblock {\em Constrainted optimization and Lagrange multiplier methods}.
\newblock Athena Scientific, Belmont, Massachusetts, 1996.

\bibitem{bertsekas1999nonlinear}
Dimitri~P Bertsekas.
\newblock {\em Nonlinear programming}.
\newblock Athena scientific, second edition, 1999.

\bibitem{Birgin2005}
E.~G. Birgin, J.~M. Martinez, and D.~P. Ronconi.
\newblock Optimizing the packing of cylinders into a rectangular container: a
  non-linear approach.
\newblock {\em Eur. J. Oper. Res.}, 160:19--33, 2005.

\bibitem{Chicone1999}
C.~Chicone.
\newblock {\em Ordinary differential equations with applications}.
\newblock Springer-Verlag New York, Inc., 1999.

\bibitem{cordoba2009crowd}
O~C{\'o}rdoba-Rodr{\'\i}guez, M~del Castillo-Mussot, and JA~Montemayor-Aldrete.
\newblock Some studies on safe maximum packing of live agents in crowds or
  containers.
\newblock {\em Revista mexicana de f{\'\i}sica}, 55(6):450--455, 2009.

\bibitem{doye2006atomicClusters}
Jonathan~PK Doye.
\newblock Physical perspectives on the global optimization of atomic clusters.
\newblock In J{\'a}nos~D. Pint{\'e}r, editor, {\em Global Optimization}, pages
  103--139. Springer, USA, 2006.

\bibitem{faure2009spheres}
Sylvain Faure, Aline Lefebvre-Lepot, and Beno{\^\i}t Semin.
\newblock Dynamic numerical investigation of random packing for spherical and
  nonconvex particles.
\newblock In M.~Ismail, B.~Maury, and Gerbeau J.-F., editors, {\em ESAIM:
  Proceedings}, volume~28, pages 13--32. EDP Sciences, 2009.

\bibitem{Hansen1990}
Jean-Pierre Hansen and Ian~R McDonald.
\newblock {\em Theory of simple liquids}.
\newblock Academic Press, New York, second edition, 1990.

\bibitem{hartke2006atomicClusters}
Bernd Hartke.
\newblock Efficient global geometry optimization of atomic and molecular
  clusters.
\newblock In J{\'a}nos~D. Pint{\'e}r, editor, {\em Global optimization}, pages
  141--168. Springer, USA, 2006.

\bibitem{Hifi2009}
Mhand Hifi and Rym M'hallah.
\newblock A literature review on circle and sphere packing problems: models and
  methodologies.
\newblock {\em Adv. Oper. Res.}, 2009:Article ID 150624, 2009.

\bibitem{Kjeldsen2000}
Tinne~Hoff Kjeldsen.
\newblock A contextualized historical analysis of the kuhn--tucker theorem in
  nonlinear programming: the impact of world war {II}.
\newblock {\em Hist. Math.}, 27(4):331--361, 2000.

\bibitem{kreft1998bacteria}
Jan-Ulrich Kreft, Ginger Booth, and Julian~WT Wimpenny.
\newblock {BacSim}, {A} simulator for individual-based modelling of bacterial
  colony growth.
\newblock {\em Microbiology}, 144(12):3275--3287, 1998.

\bibitem{Makse2004}
Hern{\'a}n~A Makse, Jasna Bruji{\'c}, and Sam~F Edwards.
\newblock Statistical mechanics of jammed matter.
\newblock {\em Phys. Granul. Media}, pages 45--85, 2004.

\bibitem{Maury2006}
Bertrand Maury.
\newblock A time-stepping scheme for inelastic collisions.
\newblock {\em Numer. Math.}, 102(4):649--679, 2006.

\bibitem{PreprintSebastien}
Sebastien Motsch, Marina Ferreira, and Pierre Degond.
\newblock Damped {A}rrow-{H}urwicz algorithm for hard-particle systems governed
  by a global potential: scaling and convergence analysis.
\newblock 2016.

\bibitem{nocedal2006numerical}
Jorge Nocedal and Stephen Wright.
\newblock {\em Numerical optimization}.
\newblock Springer Science \& Business Media, USA, second edition, 2006.

\bibitem{Wenxiang2010}
Wenxiang Xu, Huisu Chen, and Zhong Lv.
\newblock A 2{D} elliptical model of random packing for aggregates in concrete.
\newblock {\em J. of Wuhan Univ. Technol.-Mater. Sci. Ed.}, 25(4):717--720,
  2010.

\bibitem{Zallen1983}
R.~Zallen.
\newblock {\em The Physics of Amorphous Solids}.
\newblock Wiley/Interscience, New York, 1983.

\end{thebibliography}

\end{document}